\tikzset{labelsize/.style={font=\scriptsize}}
\tikzset{string/.style={very thick}}
\tikzset{
  pto/.style={->,postaction={decorate},
    decoration={
        markings,
        mark=at position 0.5 with {\arrow{|}}}
  },
}
\tikzset{2cell/.style={-implies,double,double equal sign distance,shorten >=9pt, shorten <=10pt}}
\author{Soichiro Fujii and Stephen Lack}
\address{School of Mathematical and Physical Sciences, Macquarie University, NSW 2109, Australia}
\thanks{The authors acknowledge with gratitude the support of an Australian Research Council Discovery Project DP190102432. The first author is a JSPS Overseas Research Fellow.}
\title{The oplax limit of an enriched category}
\keywords{Enriched categories, bicategories}
\mathchardef\mhyphen="2D
\newcommand{\cat}[1]{\mathcal{#1}}
\newcommand{\bcat}[1]{\mathscr{#1}}
\newcommand{\ecat}[1]{\mathbb{#1}}
\newcommand{\enCat}[1]{{#1}\mhyphen\mathbf{Cat}}
\newcommand{\op}{\mathrm{op}}
\newcommand{\Set}{\mathbf{Set}}
\newcommand{\Setlc}{\mathbf{Set}_{lc}}
\newcommand{\SET}{\mathbf{SET}}
\newcommand{\Ab}{\mathbf{Ab}}
\newcommand{\Cat}{\mathbf{Cat}}
\newcommand{\CAT}{\mathbf{CAT}}
\newcommand{\Bicat}{\mathbf{BICAT}}
\newcommand{\Enr}{\mathrm{Enr}}
\newcommand{\twoCAT}{2\mhyphen\mathbf{CAT}}
\newcommand{\one}{\mathbf{1}}
\newcommand{\two}{\mathbf{2}}
\newcommand{\twocell}{\mathbf{2_2}}
\newcommand{\Mon}{\mathbf{Mon}}
\newcommand{\ob}{\mathrm{ob}}
\newcommand{\colim}{\mathrm{colim}}
\begin{document}
\maketitle
\begin{abstract}
We show that 2-categories of the form $\mathscr{B}\mbox{-}\mathbf{Cat}$ are closed under slicing, provided that we allow $\mathscr{B}$ to range over bicategories (rather than, say, monoidal categories). That is, for any $\mathscr{B}$-category $\mathbb{X}$, we define a bicategory $\mathscr{B}/\mathbb{X}$ such that $\mathscr{B}\mbox{-}\mathbf{Cat}/\mathbb{X}\cong (\mathscr{B}/\mathbb{X})\mbox{-}\mathbf{Cat}$. The bicategory $\mathscr{B}/\mathbb{X}$ is characterized as the oplax limit of $\mathbb{X}$, regarded as a lax functor from a chaotic category to $\mathscr{B}$, in the 2-category $\mathbf{BICAT}$ of bicategories, lax functors and icons. We prove this conceptually, through limit-preservation properties of the 2-functor $\mathbf{BICAT}\to 2\mbox{-}\mathbf{CAT}$ which maps each bicategory $\mathscr{B}$ to the 2-category $\mathscr{B}\mbox{-}\mathbf{Cat}$. When $\mathscr{B}$ satisfies a mild local completeness condition, we also show that the isomorphism $\mathscr{B}\mbox{-}\mathbf{Cat}/\mathbb{X}\cong (\mathscr{B}/\mathbb{X})\mbox{-}\mathbf{Cat}$ restricts to a correspondence between fibrations in $\mathscr{B}\mbox{-}\mathbf{Cat}$ over $\mathbb{X}$ on the one hand, and $\mathscr{B}/\mathbb{X}$-categories admitting certain powers on the other.
\end{abstract}

\section{Introduction}
It is well-known that for any monoidal category $\bcat{V}$ and monoid 
$M=(M,e\colon I\to M,$ $m\colon M\otimes M\to M)$
therein, the slice category $\bcat{V}/M$ has a canonical monoidal structure; the unit is $e$ and the monoidal product of objects  $(s\colon S\to M)$ and $(t\colon T\to M)$ is 
\[
    \begin{tikzpicture}[baseline=-\the\dimexpr\fontdimen22\textfont2\relax ]
      \node(0) at (-0.5,0) {$S\otimes T$};
      \node(1) at (2,0) {$M\otimes M$};
      \node(2) at (4,0) {$M$.};      

      \draw [->] (0) to node[auto,labelsize] {$s\otimes t$} (1);
      \draw [->] (1) to node[auto,labelsize] {$m$} (2);
    \end{tikzpicture}
\]
Moreover, there is a canonical isomorphism of categories 
\[
\Mon(\bcat{V}/M)\cong \Mon(\bcat{V})/M.
\]

This paper originated from a natural generalization of this, replacing the notion of monoid in $\bcat{V}$ by that of $\bcat{V}$-category. 
That is, for any $\bcat{V}$-category $\ecat{X}$, there is an appropriate ``base'' $\bcat{V}/\ecat{X}$ admitting a canonical isomorphism of 2-categories 
\begin{equation}\label{eqn:cat_slice}
    \enCat{(\bcat{V}/\ecat{X})}\cong\enCat{\bcat{V}}/\ecat{X}.
\end{equation}
Here, the ``base'' $\bcat{V}/\ecat{X}$ is in general not a monoidal category but a bicategory. 
Enriched category theory over bicategories is developed in, e.g., \cite{Betti-et-al-variation,Street-cohomology}. We recall that, for a bicategory $\bcat{B}$, a $\bcat{B}$-category $\ecat{X}$ is given by
\begin{itemize}
    \item a set $\ob(\ecat{X})$;
    \item a function $|{-}|\colon \ob(\ecat{X})\to \ob(\bcat{B})$ ($|x|$ is called the \emph{extent} of $x$);
    \item for all $x,x'\in \ob(\ecat{X})$, a 1-cell $\ecat{X}(x,x')\colon |x|\to |x'|$ in $\bcat{B}$;
    \item for all $x\in \ob(\ecat{X})$, a 2-cell 
    \[
    \begin{tikzpicture}[baseline=-\the\dimexpr\fontdimen22\textfont2\relax ]
      \node(0) at (0,0) {$|x|$};
      \node(1) at (3,0) {$|x|$};
      
      \draw [->,bend left=20] (0) to node[auto,labelsize] {$1_{|x|}$} (1);
      \draw [->,bend right=20] (0) to node[auto,swap,labelsize] {$\ecat{X}(x,x)$} (1);
      \draw [2cell] (1.5,0.5) to node[auto,labelsize] {$j_x$} (1.5,-0.5);
    \end{tikzpicture}    
    \]
    in $\bcat{B}$, where $1_{|x|}$ is the identity 1-cell on $|x|$; and
    \item for all $x,x',x''\in\ob(\ecat{X})$, a 2-cell 
    \[
    \begin{tikzpicture}[baseline=-\the\dimexpr\fontdimen22\textfont2\relax ]
      \node(0) at (0,0) {$|x|$};
      \node(1) at (2,0.5) {$|x'|$};
      \node(2) at (4,0) {$|x''|$};
      
      \draw [->,bend left=10] (0) to node[auto,very near end,labelsize] {$\ecat{X}(x,x')$} (1);
      \draw [->,bend left=10] (1) to node[auto,very near start,labelsize] {$\ecat{X}(x',x'')$} (2);
      \draw [->,bend right=20] (0) to node[auto,swap,labelsize] {$\ecat{X}(x,x'')$} (2);
      \draw [2cell] (2,0.5) to node[auto,labelsize] {$M_{x,x',x''}$} (2,-0.5);
    \end{tikzpicture}    
    \]
    in $\bcat{B}$,
\end{itemize}
subject to the associativity and identity laws, generalizing the usual axioms for a category.

Since the isomorphism (\ref{eqn:cat_slice}) already forces us to consider enrichment over bicategories, it is natural to wonder whether there is a generalization of the isomorphism involving a bicategory $\bcat{B}$ in place of the monoidal category $\bcat{V}$. Indeed this turns out to be the case: for any bicategory $\bcat{B}$ and  $\bcat{B}$-category $\ecat{X}$, there is a bicategory $\bcat{B}/\ecat{X}$ with a canonical isomorphism of 2-categories $\enCat{(\bcat{B}/\ecat{X})}\cong \enCat{\bcat{B}}/\ecat{X}$.
Thus 2-categories of the form $\enCat{\bcat{B}}$ are closed under slicing, provided that we allow $\bcat{B}$ to range over bicategories.

The construction of $\bcat{B}/\ecat{X}$ is simple enough to carry out at this point; see also Remark~\ref{rmk:tricat-enrichment} for a more abstract point of view. We set $\ob(\bcat{B}/\ecat{X})=\ob(\ecat{X})$ and, for all $x,x'\in \ob(\bcat{B}/\ecat{X})$, the hom-category $(\bcat{B}/\ecat{X})(x,x')$ is the slice category 
$\bcat{B}(|x|,|x'|)/\ecat{X}(x,x')$. The identity 1-cell at $x$ is $j_x$, and the composite of 1-cells
$(s\colon S\to \ecat{X}(x,x'))\colon x\to x'$ and $(t\colon T\to \ecat{X}(x',x''))\colon x'\to x''$ is the pasting composite
\[
    \begin{tikzpicture}[baseline=-\the\dimexpr\fontdimen22\textfont2\relax ]
      \node(0) at (-0.5,0) {$|x|$};
      \node(1) at (2,0) {$|x'|$};
      \node(2) at (4.5,0) {$|x''|$.};
      
      \draw [->,bend left=40] (0) to node[auto,labelsize] {$S$} (1);
      \draw [->,bend left=40] (1) to node[auto,labelsize] {$T$} (2);
      \draw [->,bend left=0] (0) to node[auto,swap,labelsize] {$\ecat{X}(x,x')$} (1);
      \draw [->,bend left=0] (1) to node[auto,swap,labelsize] {$\ecat{X}(x',x'')$} (2);
      \draw [->,bend right=35] (0) to node[auto,swap,labelsize] {$\ecat{X}(x,x'')$} (2);
      \draw [2cell] (2,-0.1) to node[auto,labelsize] {$M_{x,x',x''}$} (2,-1.1);
      \draw [2cell] (0.75,0.8) to node[auto,labelsize] {$s$} (0.75,-0.2);
      \draw [2cell] (3.25,0.8) to node[auto,labelsize] {$t$} (3.25,-0.2);
    \end{tikzpicture}
\]
Of course, when both $\bcat{B}$ and $\ecat{X}$ have only one object, the construction of $\bcat{B}/\ecat{X}$ reduces to that of the slice of a monoidal category over a monoid. 

This observation allows one to view (enriched) functors as (enriched) categories, and suggests new perspectives even on notions which are not directly related to enrichment. 
For example, for any ($\Set$-)category $\ecat{X}$, there is a bicategory $\Set/\ecat{X}$ with an isomorphism $\enCat{(\Set/\ecat{X})}\cong \Cat/\ecat{X}$.
Thus we can view functors into $\ecat{X}$ as enriched categories (see Example~\ref{ex:Set/X} below and \cite{Garner-total} for a related construction), and we may potentially interpret properties of functors via enriched categorical terms. Indeed, we shall show that a functor $\ecat{Y}\to \ecat{X}$ is a Grothendieck fibration if and only if the corresponding $\Set/\ecat{X}$-category $\overline{\ecat{Y}}$ has powers by a certain class of 1-cells in $\Set/\ecat{X}$, as well as a $\bcat{B}$-enriched version of this result. 

The notation $\bcat{B}/\ecat{X}$ is justified by its characterization as the oplax limit of a 1-cell in a suitable 2-category. 
To explain this, recall that a $\bcat{B}$-category $\ecat{X}$ can be given equivalently as a lax functor $\ecat{X}\colon X_c\to \bcat{B}$, where $X_c$ is the chaotic category with the same set of objects as $\ecat{X}$.\footnote{Lax functors of this form were studied by B\'enabou \cite{Benabou-bicat} under the name {\em polyad}; for the connection with enriched categories see \cite{Street-cohomology}.} 
Thus we can view the $\bcat{B}$-category $\ecat{X}$ as a 1-cell in the 2-category $\Bicat$ of bicategories, lax functors and icons \cite{Lack_icons}. 
The bicategory $\bcat{B}/\ecat{X}$ is the oplax limit of this 1-cell in $\Bicat$:
\[
    \begin{tikzpicture}[baseline=-\the\dimexpr\fontdimen22\textfont2\relax ]
      \node(0) at (0,0) {$\bcat{B}/\ecat{X}$};
      \node(1) at (2,1) {$X_c$};
      \node(2) at (2,-1){$\bcat{B}$.};
      
      \draw [->] (0) to node[auto,labelsize] {} (1);
      \draw [->] (0) to node[auto,swap,labelsize] {} (2);
      \draw [->] (1) to node[auto,labelsize] {$\ecat{X}$} (2);
      \draw [2cell] (1.3,-0.7) to node[auto,swap,labelsize] {} (1.3,0.7);
    \end{tikzpicture}
\]
(Although $\Bicat$ is not complete, it does have oplax limits of 1-cells \cite{lax,LackShulman}.)
This generalizes the characterization of the slice monoidal category $\bcat{V}/M$ as the oplax limit of the monoid $M$ in $\bcat{V}$, regarded as a lax monoidal functor from the terminal monoidal category to $\bcat{V}$, in the 2-category of monoidal categories, lax monoidal functors and monoidal natural transformations.

In this paper, we study properties of the 2-functor $\Enr\colon \Bicat\to \twoCAT$ mapping each bicategory $\bcat{B}$ to the 2-category $\enCat{\bcat{B}}$, in order to understand the isomorphism $\enCat{(\bcat{B}/\ecat{X})}\cong \enCat{\bcat{B}}/\ecat{X}$ conceptually, as well as to establish further closure properties of 2-categories of the form $\enCat{\bcat{B}}$. 
To this end, it is useful to factorize $\Enr$ as 
\[
    \begin{tikzpicture}[baseline=-\the\dimexpr\fontdimen22\textfont2\relax ]
      \node(0) at (0,0) {$\Bicat$};
      \node(1) at (3,-1) {$\twoCAT/\Enr(\mathbf{1})$};
      \node(2) at (6,0){$\twoCAT$,};
      
      \draw [->] (0) to node[auto,swap,labelsize] {$\Enr_\mathbf{1}$} (1);
      \draw [->] (0) to node[auto,labelsize] {$\Enr$} (2);
      \draw [->] (1) to node[auto,swap,labelsize] {forgetful} (2);
    \end{tikzpicture}
\]
where $\mathbf{1}$ is the terminal bicategory. The 2-functor $\Enr_\mathbf{1}$ maps each bicategory $\bcat{B}$ to $\enCat{\bcat{B}}$ equipped with the 2-functor $\Enr(!)\colon\enCat{\bcat{B}}\to \Enr(\mathbf{1})$ induced from the unique lax functor $!\colon \bcat{B}\to \mathbf{1}$. The underlying category of $\Enr(\mathbf{1})$ is $\Set$, and $\Enr(!)$ can be regarded as $\ob(-)$, mapping each $\bcat{B}$-category $\ecat{X}$ to its set of objects $\ob(\ecat{X})$.
(Although $\Enr$ is usually denoted simply as $\enCat{(-)}$, we adopted the current notation in order to avoid the potentially misleading expression $\enCat{\mathbf{1}}$.)

In our main theorem (Theorem~\ref{thm:Enr-preserves-limits}), we show that $\Enr_\mathbf{1}\colon\Bicat\to\twoCAT/\Enr(\mathbf{1})$ preserves \emph{any} limit which happens to exist in $\Bicat$. This implies that $\Enr$ preserves any limit which happens to exist in $\Bicat$ \emph{and} is preserved by the forgetful 2-functor $\twoCAT/\Enr(\mathbf{1})\to \twoCAT$; the latter condition is satisfied whenever the limit in question is small enough to exist in $\twoCAT$ and is created by the forgetful 2-functor. 
In ordinary category theory, the limits created by the forgetful functors from slice categories are precisely the connected limits. 
In Section~\ref{sec:Cat-connected-limits} we generalize this to 2-categories (or in fact to $\bcat{V}$-categories where $\bcat{V}$ is any complete and cocomplete cartesian closed category), introducing the class of $\Cat$-connected limits with several characterizations.
Thus $\Enr\colon\Bicat\to\twoCAT$ preserves any $\Cat$-connected limit which happens to exist in $\Bicat$. This includes Eilenberg--Moore objects of comonads, for example.
Although oplax limits of 1-cells are not $\Cat$-connected, the isomorphism $\enCat{(\bcat{B}/\ecat{X})}\cong \enCat{\bcat{B}}/\ecat{X}$ is explained via the limit-preservation property of $\Enr$ and a 2-categorical argument in Section~\ref{sect:oplax}. 

Finally, in Section~\ref{sect:fibrations}, we investigate (internal) fibrations in the 2-category $\enCat{\bcat{B}}$ of $\bcat{B}$-categories. Specifically, we show that (assuming a mild local completeness condition on $\bcat{B}$) a $\bcat{B}$-functor $\ecat{Y}\to\ecat{X}$ is a fibration in $\enCat{\bcat{B}}$ if and only if the corresponding $\bcat{B}/\ecat{X}$-category $\overline{\ecat{Y}}$ admits certain powers.

We intend to revisit the results of this paper in the future, in the context of enrichment over pseudo double categories.

\section{The limit-preservation theorem}
\label{sec:limit-pres}
Size does not play a significant role in this paper; nonetheless we make a few comments here about the issues which arise and our approach to them. The typical {\em monoidal} categories over which one enriches, such as $\Set$, $\Cat$, or $\Ab$, have small hom-sets but are not themselves small. Thus the corresponding bicategories will not even have small hom-categories. We do still need some control of the size of these bicategories, and therefore fix
% The issue of size is not essential in this paper, but for the sake of definiteness we make the following conventions.
% We fix 
two Grothendieck universes $\cat{U}_0$ and $\cat{U}_1$ with $\cat{U}_0\in\cat{U}_1$. Sets, categories, etc.~in $\cat{U}_0$ and $\cat{U}_1$ are called \emph{small} and \emph{large} respectively.

Let $\Bicat$ be the 2-category of large bicategories, lax functors and icons \cite[Theorem~3.2]{Lack_icons}, and $\twoCAT$ be the 2-category of large 2-categories, 2-functors and 2-natural transformations.
We have a 2-functor $\Enr\colon\Bicat\to\twoCAT$ sending each bicategory $\bcat{B}$ to the 2-category $\enCat{\bcat{B}}$ of all small $\bcat{B}$-categories, $\bcat{B}$-functors and $\bcat{B}$-natural transformations.
It is the limit-preservation properties of this 2-functor $\Enr$ that is our main focus. The limits in question will be 2-limits weighted by 2-functors of the form $F\colon\bcat{D}\to \CAT$, where $\bcat{D}$ is a large 2-category and $\CAT$ is the 2-category of large categories.

The bicategory $\one$ with a single 2-cell is the terminal object of $\Bicat$, and hence $\Enr$ induces the 2-functor $\Enr_\mathbf{1}\colon \Bicat\to \twoCAT/\Enr(\one)$,
where $\twoCAT/\Enr(\one)$ denotes the (strict) slice 2-category of $\twoCAT$ over $\Enr(\one)\in\twoCAT$.
The 2-category $\Enr(\one)$ is the locally chaotic 2-category whose underlying category is $\Set$.
More precisely, the objects of $\Enr(\one)$ can be identified with the small sets, and for each pair of small sets $X$ and $Y$ we have $\Enr(\one)(X,Y)=\Set(X,Y)_c$, where $(-)_c$ appears in the string of adjunctions 
\begin{equation}
\label{eqn:adj-between-Set-and-Cat}
    \begin{tikzpicture}[baseline=-\the\dimexpr\fontdimen22\textfont2\relax ]
      \node(0) at (0,0) {$\SET$};
      \node(1) at (5,0) {$\CAT_0$.};
      
      \draw [<-,transform canvas={yshift=0},bend left=30] (0) to node[auto,labelsize] {$\pi_0$} (1);
      \draw [->,transform canvas={yshift=0},bend left=12] (0) to node[midway,fill=white,swap,labelsize] {$(-)_d$} (1); 
      \draw [<-,transform canvas={yshift=0},bend right=12] (0) to node[midway,fill=white,swap,labelsize] {$\ob$} (1);       
      \draw [->,transform canvas={yshift=0},bend right=30] (0) to node[auto,swap,labelsize] {$(-)_c$} (1); 
      \node [rotate=90] at (2.5,0.67) {$\vdash$}; 
      \node [rotate=90] at (2.5,0) {$\vdash$};             
      \node [rotate=90] at (2.5,-0.67) {$\vdash$}; 
    \end{tikzpicture}
\end{equation}
Here, $\SET$ and $\CAT_0$ denote the categories of large sets and of large categories respectively.
The (finite-product-preserving) functors in (\ref{eqn:adj-between-Set-and-Cat}) induce 2-adjunctions
\[
    \begin{tikzpicture}[baseline=-\the\dimexpr\fontdimen22\textfont2\relax ]
      \node(0) at (0,0) {$\CAT$};
      \node(1) at (5,0) {$\twoCAT$.};
      
      \draw [<-,transform canvas={yshift=0},bend left=30] (0) to node[auto,labelsize] {$(\pi_0)_\ast$} (1);
      \draw [->,transform canvas={yshift=0},bend left=12] (0) to node[midway,fill=white,swap,labelsize] {$(-)_{ld}$} (1); 
      \draw [<-,transform canvas={yshift=0},bend right=12] (0) to node[midway,fill=white,swap,labelsize] {$(-)_0$} (1);       
      \draw [->,transform canvas={yshift=0},bend right=30] (0) to node[auto,swap,labelsize] {$(-)_{lc}$} (1); 
      \node [rotate=90] at (2.5,0.67) {$\vdash$}; 
      \node [rotate=90] at (2.5,0) {$\vdash$};             
      \node [rotate=90] at (2.5,-0.67) {$\vdash$}; 
    \end{tikzpicture}
\]
Thus we shall write the 2-category $\Enr(\one)$ as $\Setlc$.

Explicitly, the 2-functor $\Enr_\one\colon\Bicat\to\twoCAT/\Setlc$ maps each bicategory $\bcat{B}$ to the 2-category $\enCat{\bcat{B}}$ equipped with the 2-functor $\ob(-)\colon\enCat{\bcat{B}}\to\Setlc$ which extracts the set of objects of a $\bcat{B}$-category.

%\ingrey{In the following theorem, by a weighted (2-)limit we mean a 2-limit weighted by a 2-functor $F\colon\bcat{D}\to \CAT$, where $\bcat{D}$ is a large 2-category and $\CAT$ is the 2-category of large categories.}

\begin{theorem}
\label{thm:Enr-preserves-limits}
The 2-functor $\Enr_\one\colon\Bicat\to\twoCAT/\Setlc$ preserves all weighted limits which happen to exist in $\Bicat$.
\end{theorem}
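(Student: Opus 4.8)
The plan is to exhibit $\Enr_\one(L)$, equipped with the cone obtained by applying $\Enr_\one$ to the limit cone, as the weighted limit $\{W,\Enr_\one E\}$ in $\twoCAT/\Setlc$, where $L=\{W,E\}$ is the given weighted limit of a diagram $E\colon\bcat{D}\to\Bicat$. Weighted limits in a slice $2$-category $\mathcal{K}/X$ reduce to weighted limits in $\mathcal{K}$ of the diagram augmented by its leg to $X$, so the target $\{W,\Enr_\one E\}$ exists in $\twoCAT/\Setlc$ as soon as the corresponding limit exists in $\twoCAT$; it then suffices to prove that the canonical comparison $1$-cell $\ell\colon\Enr_\one(L)\to\{W,\Enr_\one E\}$ is invertible. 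I would detect this using the representables $(\twoCAT/\Setlc)((G,P),-)$, with $G$ ranging over $\one$, $\two$ and $\twocell$ and $P\colon G\to\Setlc$ over all $2$-functors into $\Setlc$ (so that $P$ names a small set, a function, or a parallel pair of functions). These representables are jointly conservative, since they detect bijectivity on objects, $1$-cells and $2$-cells over $\Setlc$, and every representable preserves the existing limit $\{W,\Enr_\one E\}$; hence it is enough to show that each of them sends $\Enr_\one(L)$ to $\{W,(\twoCAT/\Setlc)((G,P),\Enr_\one E-)\}$ compatibly with $\ell$.

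The computation of these representables rests on a single observation. The fibre of $\ob\colon\enCat{\bcat{B}}\to\Setlc$ over a small set $S$ is $\Bicat(S_c,\bcat{B})$: its objects are lax functors $S_c\to\bcat{B}$, i.e.\ $\bcat{B}$-categories on $S$, and its morphisms are icons, i.e.\ identity-on-objects $\bcat{B}$-functors. More generally a $\bcat{B}$-functor $\ecat{X}\to\ecat{Y}$ lying over a function $f\colon S\to T$ is exactly an icon $\ecat{X}\Rightarrow\ecat{Y}\circ f_c$, that is, a morphism $\ecat{X}\to(f_c)^{\ast}\ecat{Y}$ in $\Bicat(S_c,\bcat{B})$, where $(f_c)^{\ast}=\Bicat(f_c,\bcat{B})$ is restriction along $f_c\colon S_c\to T_c$. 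It follows that, naturally in $\bcat{B}$, the hom-category $(\twoCAT/\Setlc)((G,P),\Enr_\one\bcat{B})$ is a weighted limit in $\CAT$ of a diagram built solely from the representables $\Bicat(S_c,-)$, $\Bicat(T_c,-)$ and the reindexing functors $(f_c)^{\ast}$: it is $\Bicat(S_c,\bcat{B})$ itself when $G=\one$, the comma category $\mathrm{id}\downarrow(f_c)^{\ast}$ when $G=\two$ and $P$ names $f\colon S\to T$, and an analogous comma/inserter construction when $G=\twocell$.

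Granting this, preservation becomes formal. Each $\Bicat(S_c,-)\colon\Bicat\to\CAT$ is representable, hence sends $L=\{W,E\}$ to $\{W,\Bicat(S_c,E-)\}$; as weighted limits commute with weighted limits, the $\CAT$-limits of the previous paragraph are carried by the assignment $\bcat{B}\mapsto(\twoCAT/\Setlc)((G,P),\Enr_\one\bcat{B})$ to the corresponding limits over $E$. Thus each representable sends $\Enr_\one(L)$ to $\{W,(\twoCAT/\Setlc)((G,P),\Enr_\one E-)\}$, and one checks this identification is the image of $\ell$; joint conservativity then yields that $\ell$ is invertible. This is exactly where slicing over $\Setlc$ pays off: fixing the base records the object-sets, replacing the coproduct over object-sets — which obstructs preservation for the bare $\Enr\colon\Bicat\to\twoCAT$ — by genuine limits inside each fibre $\Bicat(S_c,-)$.

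The main obstacle I anticipate is the bookkeeping behind these identifications, above all the case $G=\twocell$. A $\bcat{B}$-natural transformation $\theta\colon F\Rightarrow F'$ between $\bcat{B}$-functors lying over $f$ and over $g$ respectively, sitting over the necessarily unique $2$-cell $f\Rightarrow g$ of the locally chaotic $\Setlc$, has components $\theta_s\colon 1_{|s|}\Rightarrow\ecat{Y}(fs,gs)$, and I must repackage this as a morphism in a weighted limit assembled from $\Bicat(S_c,-)$ and $\Bicat(T_c,-)$, taking care that the chaotic $2$-cells of $\Setlc$ impose compatibility but contribute no further data. Secondary points requiring care are the joint conservativity of the family $\{(G,P)\}$ in $\twoCAT/\Setlc$ and the naturality in $\bcat{B}$ of the comma/inserter presentations; both are routine but must be in place to license the formal argument.
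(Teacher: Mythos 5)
Your overall architecture coincides with the paper's: detect the comparison 1-cell with representables based at free cells over $\Setlc$, show each such representable is a weighted limit of representables $\Bicat(\bcat{A},-)$, and conclude by commutation of limits plus joint conservativity. The cases $G=\one$ and $G=\two$ are correct as you state them (and in fact $G=\twocell$ alone suffices, since a 2-functor over $\Setlc$ that is bijective on 2-cells is already an isomorphism). But there is a genuine gap at the one point that carries all the weight: your claim that for $G=\twocell$ the hom-category $(\twoCAT/\Setlc)((\twocell,(f_0,f_1)),\Enr_\one\bcat{B})$ is ``an analogous comma/inserter construction'' built \emph{solely} from $\Bicat(X_c,-)$, $\Bicat(Y_c,-)$ and the reindexing functors $(f_{ic})^\ast$. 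This is not deferred bookkeeping; it is false. By Lemma~\ref{lem:B-nat-tr}, a $\bcat{B}$-natural transformation $\alpha\colon F_0\Rightarrow F_1$ amounts to a family $\alpha_{x,x'}\colon\ecat{C}(x,x')\to \ecat{D}(f_0x,f_1x')$, i.e.\ a \emph{mixed} restriction of $\ecat{D}$, along $f_0$ in one variable and $f_1$ in the other; whereas every piece of morphism data available in a weighted limit of your diagram is an icon, whose components are \emph{diagonal} families $\ecat{E}(x,x')\to\ecat{E}'(hx,hx')$ with the same function $h$ in both variables, and no pasting of such data produces the mixed kind.

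Concretely: take $X=Y=1$ and $f_0=f_1=\mathrm{id}$. Then your diagram consists of the single category $\Bicat(\one,\bcat{B})$ (monads in $\bcat{B}$ and monad icons) together with identity functors, so \emph{any} weighted limit of it is a power $[V,\Bicat(\one,\bcat{B})]$ for a fixed category $V$ (namely $V=\colim W$, the colimit of the weight). Now put $\bcat{B}=\Sigma(\Set)$, so that $\Bicat(\one,\bcat{B})\cong\Mon(\Set)$. The category $[V,\Mon(\Set)]$ has a zero object (the constant functor at the trivial monoid). But the hom-category you must produce has objects the tuples $(C,D,F_0,F_1\colon C\to D,\alpha\in D)$ with $\alpha F_0(c)=F_1(c)\alpha$ for all $c\in C$; its terminal object is $(1,1,!,!,e)$ while its initial object is $(1,\mathbb{N},!,!,1)$, and these are not isomorphic. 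Hence this category is not isomorphic to $[V,\Mon(\Set)]$ for any $V$ whatsoever, let alone naturally in $\bcat{B}$. The missing idea is the paper's auxiliary bicategory $\two\times X_c$, which is \emph{not} chaotic: its hom-categories from $(1,x)$ to $(0,x')$ are empty, and precisely this directedness lets icons out of $\two\times X_c$ encode the one-way family $\alpha_{x,x'}$ (the chaotic replacement $(\two\times X)_c$ would instead force an unwanted backwards family $\ecat{C}(x,x')\to\ecat{D}(f_1x,f_0x')$ as well). With $\pi\colon\two\times X_c\to X_c$ and $[f_0,f_1]\colon\two\times X_c\to Y_c$, the $\twocell$-representable is the comma object of $\Bicat(\pi,-)$ and $\Bicat([f_0,f_1],-)$, and from there your formal argument — limits of representables preserve whatever limits exist, then joint conservativity — goes through as you describe.
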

\begin{proof}
We shall show the following.
\begin{enumerate}[label=(\Roman*)]
    \item[(a)] The set $\cat{G}$ of all objects of $\twoCAT/\Setlc$ of the form $(\twocell\to\Setlc)$, where $\twocell$ denotes the free 2-category on a single 2-cell, is a strong generator of the 2-category $\twoCAT/\Setlc$.
    \item[(b)] For each object $A\in\cat{G}$, the 2-functor $\twoCAT/\Setlc(A,\Enr_\one(-))\colon \Bicat\to\CAT$ is a 2-limit of representable 2-functors, and hence preserves all weighted limits which happen to exist in $\Bicat$.
\end{enumerate}
From these, the main claim follows. Indeed, let $\bcat{D}$ be a large 2-category, $F\colon \bcat{D}\to\CAT$ be a 2-functor (the weight) and $S\colon \bcat{D}\to \Bicat$ be a 2-functor such that the weighted limit $\{F,S\}$ exists in $\Bicat$. Then the weighted limit $\{F,\Enr_\one\circ S\}$ exists in $\twoCAT/\Setlc$, because $\twoCAT/\Setlc$ has all (large) weighted limits. We have a comparison 1-cell $M\colon \Enr_\one\{F,S\}\to\{F,\Enr_\one\circ S\}$ in $\twoCAT/\Setlc$. Now for each $A\in\cat{G}$, the functor
\[
\twoCAT/\Setlc(A,M)\colon\twoCAT/\Setlc(A,\Enr_\one\{F,S\})\to\twoCAT/\Setlc(A,\{F,\Enr_\one\circ S\})
\]
is an isomorphism by (b), from which we conclude that $M$ is an isomorphism by (a).

$\cat{G}$ is a strong generator of $\twoCAT/\Setlc$ because, given any 1-cell $T\colon (\bcat{X}\to\Setlc)\to(\bcat{Y}\to\Setlc)$, i.e., a 2-functor $T\colon \bcat{X}\to\bcat{Y}$ between 2-categories $\bcat{X}$ and $\bcat{Y}$ over $\Setlc$, the condition that $\twoCAT/\Setlc(A,T)$ be an isomorphism for all $A\in\cat{G}$ means that $T$ is bijective on 2-cells.

To show (b), observe that a 2-functor $\twocell\to\Setlc$ corresponds to a parallel pair of functions $f_0,f_1\colon X\to Y$. Such a 2-functor can be seen as an object of $\twoCAT/\Setlc$. Given $((f_0,f_1)\colon\twocell\to\Setlc)$ where $f_0,f_1\colon X\to Y$, first 
consider the category $\two\times X_c$ where $\two=\{0<1\}$ is the two-element chain.
%define the category $X_c\join X_c$ by
%\[
%\ob(X_c\join X_c)=\{(0,x)\mid x\in X\}\cup \{(1,x)\mid x\in X\}
%\]
%and for each pair of objects $(i,x)$ and $(i',x')$, 
%\[
%(X_c\join X_c)((i,x),(i',x'))=
%\begin{cases}
%\{\ast\}  & \text{if }i\leq i';\\
%\emptyset & \text{otherwise}.
%\end{cases}
%\]
We regard $\two\times X_c$ as a bicategory as well.
We have the projection functor $\pi\colon \two\times X_c\to X_c$ and the functor $[f_0,f_1]\colon \two\times X_c\to Y_c$ defined by $[f_0,f_1](i,x)=f_i(x)$;  
these can also be regarded as lax functors, i.e., morphisms in $\Bicat$.
The 2-functor 
\[
\twoCAT/\Setlc((f_0,f_1),\Enr_\one(-))\colon\Bicat\to\CAT
\] 
is the comma object (in $[\Bicat,\CAT]$) as in 
\[
    \begin{tikzpicture}[baseline=-\the\dimexpr\fontdimen22\textfont2\relax ]
      \node(01) at (0,1) {$\twoCAT/\Setlc((f_0,f_1),\Enr_\one(-))$};
      \node(00) at (0,-1) {$\Bicat(Y_c,-)$};
      \node(11) at (6,1) {$\Bicat(X_c,-)$};
      \node(10) at (6,-1) {$\Bicat(\two\times X_c,-)$.};
      
      \draw [->] (01) to node[auto,labelsize] {} (11);
      \draw [->] (11) to node[auto,labelsize] {$\Bicat(\pi,-)$} (10);
      \draw [->] (01) to node[auto,swap,labelsize] {} (00);
      \draw [->] (00) to node[auto,swap,labelsize] {$\Bicat([f_0,f_1],-)$} (10);
      \draw [2cell] (3.5,0.5) to (2.5,-0.5);
    \end{tikzpicture}
\]
Indeed, for any bicategory $\bcat{B}\in\Bicat$, an object of the comma category of the functors $\Bicat([f_0,f_1],\bcat{B})$ and $\Bicat(\pi,\bcat{B})$ consists of lax functors $\ecat{C}\colon X_c\to\bcat{B}$ and $\ecat{D}\colon Y_c\to\bcat{B}$ together with an icon
\[
    \begin{tikzpicture}[baseline=-\the\dimexpr\fontdimen22\textfont2\relax ]
      \node(01) at (0,1) {$\two\times X_c$};
      \node(00) at (0,-1) {$Y_c$};
      \node(11) at (2,1) {$X_c$};
      \node(10) at (2,-1) {$\bcat{B}$.};
      
      \draw [->] (01) to node[auto,labelsize] {$\pi$} (11);
      \draw [->] (11) to node[auto,labelsize] {$\ecat{C}$} (10);
      \draw [->] (01) to node[auto,swap,labelsize] {$[f_0,f_1]$} (00);
      \draw [->] (00) to node[auto,swap,labelsize] {$\ecat{D}$} (10);
      \draw [2cell] (1.5,0.5) to node[auto,swap,labelsize] {$\alpha$} (0.5,-0.5);
    \end{tikzpicture}
\]
This corresponds to $\bcat{B}$-categories $\ecat{C}$ and $\ecat{D}$ with $\ob(\ecat{C})=X$ and $\ob(\ecat{D})=Y$ such that $|x|_\ecat{C}=|f_i(x)|_\ecat{D}$ for all $x\in X$ and $i\in\{0,1\}$, together with a 2-cell $\alpha_{(i,x),(i',x')}\colon \ecat{C}(x,x')\to \ecat{D}(f_i(x),f_{i'}(x'))$ in $\bcat{B}$ for all $(i,x),(i',x')\in \two\times X_c$ with $i\leq i'$, satisfying some equations.
These latter data in turn correspond to $\bcat{B}$-functors $F_0\colon\ecat{C}\to\ecat{D}$ and $F_1\colon\ecat{C}\to\ecat{D}$ (with $\ob(F_i)=f_i$) together with a $\bcat{B}$-natural transformation $\alpha\colon F_0\to F_1$. 
(We record in Lemma~\ref{lem:B-nat-tr} below an observation which is useful for the verification.)

This gives a bijective correspondence on objects of $\twoCAT/\Setlc((f_0,f_1),\Enr_\mathbf{1}(\bcat{B}))$ and the comma category of $\Bicat([f_0,f_1],\bcat{B})$ and $\Bicat(\pi,\bcat{B})$, which routinely extends to an isomorphism of categories natural in $\bcat{B}$.
\end{proof}

\begin{lemma}
\label{lem:B-nat-tr}
Let $\bcat{B}$ be a bicategory, $\ecat{C},\ecat{D}$ be $\bcat{B}$-categories and $T,S\colon\ecat{C}\to\ecat{D}$ be $\bcat{B}$-functors. To give a $\bcat{B}$-natural transformation $\alpha\colon T\to S$, i.e., a family of 2-cells
\[
    \begin{tikzpicture}[baseline=-\the\dimexpr\fontdimen22\textfont2\relax ]
      \node(0) at (0,0) {$|x|$};
      \node(1) at (3,0) {$|x|$};
      
      \draw [->,bend left=20] (0) to node[auto,labelsize] {$1_{|x|}$} (1);
      \draw [->,bend right=20] (0) to node[auto,swap,labelsize] {$\ecat{D}(Tx,Sx)$} (1);
      \draw [2cell] (1.5,0.5) to node[auto,labelsize] {$\alpha_x$} (1.5,-0.5);
    \end{tikzpicture}    
\]
in $\bcat{B}$ for all $x\in\ecat{C}$, satisfying the naturality axiom saying that for all $x,x'\in\ecat{C}$,
\begin{equation}
\label{eqn:naturality}
\begin{tikzpicture}[baseline=-\the\dimexpr\fontdimen22\textfont2\relax ]
      \node(0) at (2,0) {$\ecat{C}(x,x')$};
      \node(10) at (2,1.5) {$1_{|x'|}.\ecat{C}(x,x')$};
      \node(11) at (2,-1.5) {$\ecat{C}(x,x') . 1_{|x|}$};
      \node(20) at (7,1.5) {$\ecat{D}(Tx',Sx') .\ecat{D}(Tx,Tx')$};
      \node(21) at (7,-1.5) {$\ecat{D}(Sx,Sx') .\ecat{D}(Tx,Sx)$};
      \node(3) at (7,0) {$\ecat{D}(Tx,Sx')$};
      
      \draw [->] (0) to node[auto, labelsize] {$\cong$} (10); 
      \draw [->] (10) to node[auto, labelsize] {$\alpha_{x'} . T_{x,x'}$} (20); 
      \draw [->] (20) to node[auto,labelsize] {$M^{\ecat{D}}_{Tx,Tx',Sx'}$} (3); 
      \draw [->] (0) to node[auto,swap,labelsize] {$\cong$} (11); 
      \draw [->] (11) to node[auto,swap,labelsize] {$S_{x,x'} . \alpha_x$} (21); 
      \draw [->] (21) to node[auto,swap,labelsize] {$M^\ecat{D}_{Tx,Sx,Sx'}$} (3); 
\end{tikzpicture}
\end{equation}
commutes, is equivalent to giving a family of 2-cells 
\[
    \begin{tikzpicture}[baseline=-\the\dimexpr\fontdimen22\textfont2\relax ]
      \node(0) at (0,0) {$|x|$};
      \node(1) at (3,0) {$|x'|$};
      
      \draw [->,bend left=20] (0) to node[auto,labelsize] {$\ecat{C}(x,x')$} (1);
      \draw [->,bend right=20] (0) to node[auto,swap,labelsize] {$\ecat{D}(Tx,Sx')$} (1);
      \draw [2cell] (1.5,0.5) to node[auto,labelsize] {$\alpha_{x,x'}$} (1.5,-0.5);
    \end{tikzpicture}    
\]
in $\bcat{B}$ for all $x,x'\in\ecat{C}$, such that for all $x,x',x''\in\ecat{C}$,
\[
    \begin{tikzpicture}[baseline=-\the\dimexpr\fontdimen22\textfont2\relax ]
      \node(01) at (0,0.75) {$\ecat{C}(x',x'') .\ecat{C}(x,x')$};
      \node(00) at (0,-0.75) {$\ecat{D}(Sx',Sx'') .\ecat{D}(Tx,Sx')$};
      \node(11) at (5,0.75) {$\ecat{C}(x,x'')$};
      \node(10) at (5,-0.75) {$\ecat{D}(Tx,Sx'')$};
      
      \draw [->] (01) to node[auto,labelsize] {$M^\ecat{C}_{x,x',x''}$} (11);
      \draw [->] (11) to node[auto,labelsize] {$\alpha_{x,x''}$} (10);
      \draw [->] (01) to node[auto,swap,labelsize] {$S_{x',x''} . \alpha_{x,x'}$} (00);
      \draw [->] (00) to node[auto,swap,labelsize] {$M^\ecat{D}_{Tx,Sx',Sx''}$} (10);
      
      \node(01a) at (0,-2.5) {$\ecat{C}(x',x'') .\ecat{C}(x,x')$};
      \node(00a) at (0,-4) {$\ecat{D}(Tx',Sx'') .\ecat{D}(Tx,Tx')$};
      \node(11a) at (5,-2.5) {$\ecat{C}(x,x'')$};
      \node(10a) at (5,-4) {$\ecat{D}(Tx,Sx'')$};
      
      \draw [->] (01a) to node[auto,labelsize] {$M^\ecat{C}_{x,x',x''}$} (11a);
      \draw [->] (11a) to node[auto,labelsize] {$\alpha_{x,x''}$} (10a);
      \draw [->] (01a) to node[auto,swap,labelsize] {$\alpha_{x',x''} . T_{x,x'}$} (00a);
      \draw [->] (00a) to node[auto,swap,labelsize] {$M^\ecat{D}_{Tx,Tx',Sx''}$} (10a);
    \end{tikzpicture}
    \qquad\text{and}
\]
commute; the correspondence is given by mapping $(\alpha_x)$ to $(\alpha_{x,x'})$ whose component at $(x,x')$ is the composite (\ref{eqn:naturality}).
\end{lemma}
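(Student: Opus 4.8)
The plan is to promote the stated assignment to a genuine bijection by writing down an explicit inverse and checking that the two operations are mutually inverse, with the single naturality axiom~(\ref{eqn:naturality}) on one side corresponding to the two commuting squares on the other. Throughout I follow the composition convention of the statement (writing ``$.$'' for the horizontal composite of $1$-cells and $\circ$ for pasting along a path), suppress the associativity and unit constraints of $\bcat{B}$ by writing $\cong$, and use freely the $\bcat{B}$-functor axioms for $T$ and $S$: the compatibility of $T_{-,-}$ and $S_{-,-}$ with the composition $2$-cells $M^\ecat{C},M^\ecat{D}$ and with the units $j^\ecat{C},j^\ecat{D}$. The inverse assignment is easy to guess: given a family $(\alpha_{x,x'})$ satisfying the two squares, set $\alpha_x$ to be the vertical composite of the unit $j_x\colon 1_{|x|}\to\ecat{C}(x,x)$ with $\alpha_{x,x}\colon\ecat{C}(x,x)\to\ecat{D}(Tx,Sx)$, producing a $2$-cell $\alpha_x\colon 1_{|x|}\to\ecat{D}(Tx,Sx)$ of the required shape.

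Next I would verify that this $(\alpha_x)$ satisfies the naturality axiom~(\ref{eqn:naturality}), and simultaneously that the round trip $(\alpha_{x,x'})\mapsto(\alpha_x)\mapsto(\alpha_{x,x'})$ is the identity. The key device is that each of the two squares, specialized to a degenerate triple and precomposed with a unit, recovers $\alpha_{x,x'}$. Instantiating the second square at $(x,x',x')$ and precomposing its top edge with $j_{x'}.\ecat{C}(x,x')$, the left unit law for $\ecat{C}$ collapses that edge and exhibits $\alpha_{x,x'}$ as $M^\ecat{D}_{Tx,Tx',Sx'}\circ(\alpha_{x'}.T_{x,x'})$ (precomposed with the left unit constraint of $\bcat{B}$); dually, instantiating the first square at $(x,x,x')$ and precomposing with $\ecat{C}(x,x').j_x$, the right unit law exhibits $\alpha_{x,x'}$ as $M^\ecat{D}_{Tx,Sx,Sx'}\circ(S_{x,x'}.\alpha_x)$. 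These two expressions are precisely the two legs of~(\ref{eqn:naturality}), so equating them gives naturality of $(\alpha_x)$; and either expression shows that the composite~(\ref{eqn:naturality}) built from this $(\alpha_x)$ returns the original $\alpha_{x,x'}$, completing one half of the bijection.

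For the remaining half I would start from a natural $(\alpha_x)$, form $(\alpha_{x,x'})$ via~(\ref{eqn:naturality}), and check that the inverse recovers $\alpha_x$: at $x=x'$ the $\bcat{B}$-functor unit axiom $T_{x,x}\circ j^\ecat{C}_x\cong j^\ecat{D}_{Tx}$ together with the right unit law of $\ecat{D}$ reduces $\alpha_{x,x}\circ j_x$ to $\alpha_x$. It then remains to confirm that the $(\alpha_{x,x'})$ produced from a natural $(\alpha_x)$ genuinely satisfies the two squares, and I expect this to be the \emph{main obstacle}: it is a pasting computation in $\bcat{B}$ combining the two equivalent formulas for $\alpha_{x,x'}$, the functoriality squares expressing compatibility of $T$ and $S$ with $M^\ecat{C}$ and $M^\ecat{D}$, and the associativity coherence of $\bcat{B}$. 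The strategy there is to expand $\alpha_{x,x''}\circ M^\ecat{C}_{x,x',x''}$ using whichever of the two formulas is convenient at each factor, insert the functoriality square for $T$ (respectively $S$) to transport $M^\ecat{C}$ across to $M^\ecat{D}$, and reassociate; no new idea beyond careful bookkeeping of the constraints is required, since every square is forced by the shape of the classical $\bcat{V}$-enriched reformulation of a natural transformation, of which this is the $\bcat{B}$-enriched analogue.
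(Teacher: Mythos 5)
The paper records this lemma without proof, treating it as a routine observation, so the only benchmark is correctness: your proposal is correct, and it takes the approach the statement itself dictates, namely the stated forward map together with the evident inverse $\alpha_x = \alpha_{x,x}\circ j_x$. Your two key devices are sound --- instantiating the squares at the degenerate triples $(x,x',x')$ and $(x,x,x')$ and precomposing with $j_{x'}.\ecat{C}(x,x')$ and $\ecat{C}(x,x').j_x$ does recover the two legs of the naturality composite via the unit axioms of $\ecat{C}$, and the remaining verification that a natural $(\alpha_x)$ yields both squares follows exactly as you say, each square from one of the two (agreeing, by naturality) formulas for $\alpha_{x,x'}$ plus the composition axiom of $T$ or $S$ and bicategorical coherence.
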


As observed in \cite[Section~6.2]{Lack_icons}, the 2-category $\Bicat$ can be seen as the 2-category of strict algebras, lax morphisms, and algebra 2-cells for a 2-monad $T$ on a certain locally presentable 2-category of $\CAT$-enriched graphs, and so by \cite{lax} has oplax limits, Eilenberg--Moore objects of comonads, and limits of diagrams containing only strict morphisms; this last class includes in particular products and powers. It also has various other sorts of limits where certain parts of the diagram are required to be pseudofunctors. For a more precise characterization see \cite{LackShulman}. 

The case of oplax limits of 1-cells is our motivating example, and is formalized in Section~\ref{sect:oplax}, specifically in Theorem~\ref{thm:enrich-over-slice}.
The case of Eilenberg--Moore objects of comonads is treated in Example~\ref{ex:comonads}. 
As a final example, we consider products. In this case, Theorem~\ref{thm:Enr-preserves-limits} says that, for bicategories $\bcat{B}$ and $\bcat{C}$, the diagram
\[
    \begin{tikzpicture}[baseline=-\the\dimexpr\fontdimen22\textfont2\relax ]
      \node(01) at (0,1) {$\enCat{(\bcat{B}\times\bcat{C})}$};
      \node(11) at (3,1) {$\enCat{\bcat{C}}$};
      \node(00) at (0,-1){$\enCat{\bcat{B}}$};
      \node(10) at (3,-1){$\Setlc$};
      \draw [->] (01) to node[auto,labelsize] {} (11);
      \draw [->] (11) to node[auto,labelsize] {$\ob$} (10);
      \draw [->] (01) to node[auto,swap,labelsize] {} (00);
      \draw [->] (00) to node[auto,swap, labelsize] {$\ob$} (10);
    \end{tikzpicture}
\]
is a pullback of 2-categories. In particular, to give a  $\bcat{B}\times\bcat{C}$-category is equivalent to giving a $\bcat{B}$-category and a $\bcat{C}$-category with the same set of objects.

\begin{remark}
It is possible to remove any size-related conditions on the notion of weighted limit in Theorem~\ref{thm:Enr-preserves-limits}. That is, for any (possibly larger than ``large'') 2-category $\bcat{D}$ and a weight $F\colon \bcat{D}\to \CAT'$, where $\CAT'$ is a 2-category of categories in a universe containing $\mathcal{U}_1$, $\Enr_\mathbf{1}$ preserves all $F$-weighted limits which happen to exist in $\Bicat$.
Indeed, let $S\colon \bcat{D}\to \Bicat$ be a 2-functor such that $\{F,S\}$ exists in $\Bicat$. Then, although a priori we do not know if $\{F,\Enr_\one\circ S\}$ exists in $\twoCAT/\Setlc$ or not, we can certainly consider a large enough variant $\twoCAT'/\Setlc$ in which it does. Then by the above discussion we have $\Enr_\one\{F,S\}\cong\{F,\Enr_\one\circ S\}$ in $\twoCAT'/\Setlc$. Since the fully faithful 2-functor $\twoCAT/\Setlc\to\twoCAT'/\Setlc$ reflects limits, and $\Enr_\one$ does land in $\twoCAT/\Setlc$, we see that the limit $\{F,\Enr_\one\circ S\}$ actually exists in $\twoCAT/\Setlc$.
\end{remark}

\section{Weighted limits created by forgetful 2-functors $\bcat{K}/A\to\bcat{K}$}
\label{sec:Cat-connected-limits}
Theorem~\ref{thm:Enr-preserves-limits} implies that the 2-functor $\Enr\colon\Bicat\to \twoCAT$ preserves all weighted limits preserved by the forgetful 2-functor $\twoCAT/\Setlc\to\twoCAT$. 
%\ingrey{Such limits include all limits which are small enough to exist in $\twoCAT$ and are created by the forgetful 2-functors from slice 2-categories.} 
We now investigate these. 

A large part of this section (until the end of Example~\ref{ex:products-etc}) is devoted to the study of this class of limits, which we shall call $\Cat$-connected. Since this notion does not require two separate universes, and since it may be of interest in other contexts, we work with a single universe $\mathcal{U}$, whose elements we call \emph{small} sets. (We temporarily ignore $\mathcal{U}_0$ introduced at the beginning of Section~\ref{sec:limit-pres}.)
When we later return to the study of $\Bicat$ and $\twoCAT/\Setlc$, we apply our results in the case $\mathcal{U}=\mathcal{U}_1$, and so speak of $\CAT$-connected limits.

%\ingrey{A large part of this section (until the end of Example~\ref{ex:products-etc}) is devoted to the study of this class of limits from an enriched-categorical perspective as in \cite{Kelly_BCECT}. Since the issue of size is not particularly relevant here, we work within the universe $\mathcal{U}_0$ of small sets in order to retain the more familiar notation.
%When we later apply the following results to a situation involving $\Bicat$ or $\twoCAT/\Setlc$, which are $\CAT$-enriched rather than $\Cat$-enriched, we interpret the following discussion in the universe $\mathcal{U}_1$ of large sets.}

In the literature there are (at least) two definitions of creation of limit. 
Given 2-functors $F\colon \bcat{D}\to\Cat$, $S\colon \bcat{D}\to\bcat{A}$, and $G\colon \bcat{A}\to\bcat{B}$, the phrase ``$G$ creates the $F$-weighted limit of $S$'' could mean either of the following.
\begin{itemize}
    \item For any $F$-weighted limit 
    $(\{F,GS\},\mu\colon F\to\bcat{B}(\{F,GS\},GS-))$ of $GS$, there exists a unique $F$-cylinder $(L,\nu\colon F\to\bcat{A}(L,S-))$ over $S$ in $\bcat{A}$ such that $\{F,GS\}=GL$ and $\mu=G_{L,S-}\circ \nu$ hold. Moreover, $(L,\nu)$ is an $F$-weighted limit of $S$.
    \item For any $F$-weighted limit 
    $(\{F,GS\},\mu)$ of $GS$, there exists an $F$-cylinder $(L,\nu)$ over $S$ in $\bcat{A}$ such that the mediating 1-cell $GL\to \{F,GS\}$ is an isomorphism. Moreover, such an $F$-cylinder $(L,\nu)$ is always an $F$-weighted limit of $S$.
\end{itemize}
These two conditions are equivalent when $G$ is the forgetful 2-functor $\bcat{K}/A\to \bcat{K}$ from a slice 2-category, since such 2-functors reflect identities and lift invertible 1-cells.

In the following, $1$ and $\mathbf{1}$ denote the terminal category and the terminal 2-category, respectively.

\begin{theorem}
\label{thm:strongly-connected-limits}
Let $\bcat{D}$ be a small 2-category and $F\colon \bcat{D}\to\Cat$ be a 2-functor. Then the following are equivalent.
\begin{enumerate}[label=\emph{(\arabic*)}]
    \item All $F$-weighted limits are created by the forgetful 2-functor $\bcat{K}/A\to\bcat{K}$ for any locally small 2-category $\bcat{K}$ and $A\in\bcat{K}$.
    \item All $F$-weighted limits commute with copowers in $\Cat$. In other words, $F$-weighted limits are preserved by the 2-functor $X\times  (-)\colon\Cat\to\Cat$ for any $ X\in\Cat$.
    \item The $F$-weighted limit of the unique 2-functor $\bcat{D}\to \mathbf{1}$ is preserved by any 2-functor $ \one\to \Cat$: that is, $ X\cong [\bcat{D},\Cat](F,\Delta X)$ for any $ X\in\Cat$.
    \item The $F$-weighted limit of the unique 2-functor $\bcat{D}\to \mathbf{1}$ is preserved by any 2-functor $\one\to\bcat{K}$: that is, $A\cong \{F,\Delta A\}$ for any locally small 2-category $\bcat{K}$ and  $A\in\bcat{K}$.
    \item $F\ast(-)\colon [\bcat{D}^\op,\Cat]\to\Cat$ preserves the terminal object. In other words, the $F$-weighted colimit of ${\Delta}1\colon \bcat{D}^\op\to\Cat$ is the terminal category: $F\ast{\Delta} 1 \cong 1$.
    \item The (conical) colimit of $F$ is the terminal category: $\Delta 1\ast F \cong 1$.
\end{enumerate}
\end{theorem}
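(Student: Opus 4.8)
The plan is to make the conical colimit $\colim F$ the central invariant and to show that conditions (3), (5) and (6) each assert exactly that $\colim F\cong 1$. First I would record the computation
\[
[\bcat{D},\Cat](F,\Delta X)\;\cong\;\int_{d\in\bcat{D}}\Cat(Fd,X)\;\cong\;\Cat(\colim F,X)\;=\;X^{\colim F},
\]
valid for every $X\in\Cat$, obtained by writing the weighted limit as an end and using that $\Cat(-,X)$ sends the colimit $\colim F=\int^{d}Fd$ to this end. Since $(\colim F)\times(-)$ is left adjoint to $(-)^{\colim F}$, the requirement $X\cong X^{\colim F}$ for all $X$ holds if and only if $\colim F\cong 1$, which is precisely condition (3). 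For (5) and (6) I would simply note that both weighted colimits collapse to the conical colimit, $F\ast\Delta 1\cong\int^{d}Fd\cong\Delta 1\ast F$, so that each is literally the assertion $\colim F\cong 1$; this settles (3) $\Leftrightarrow$ (5) $\Leftrightarrow$ (6).

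Next I would establish (2) $\Leftrightarrow$ (3). For the forward direction, given $S\colon\bcat{D}\to\Cat$ and $X\in\Cat$, using $(X\times Sd)^{Fd}\cong X^{Fd}\times(Sd)^{Fd}$ and the fact that ends commute with products I would compute
\[
\{F,\,d\mapsto X\times Sd\}\;\cong\;\int_{d}(X\times Sd)^{Fd}\;\cong\;\Bigl(\int_d X^{Fd}\Bigr)\times\Bigl(\int_d (Sd)^{Fd}\Bigr)\;\cong\;\{F,\Delta X\}\times\{F,S\},
\]
so that under (3), where $\{F,\Delta X\}\cong X$, the copower $X\times(-)$ commutes with $\{F,-\}$. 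Conversely, applying (2) to $S=\Delta 1$ and using $\{F,\Delta 1\}\cong\Cat(\colim F,1)\cong 1$ gives $X\cong X\times\{F,\Delta 1\}\cong\{F,\Delta X\}$, which is (3). For (3) $\Leftrightarrow$ (4), the substantive direction is (3) $\Rightarrow$ (4): for locally small $\bcat{K}$ and $A\in\bcat{K}$ I would apply (3) with $X=\bcat{K}(k,A)$ (small by local smallness) to obtain $\bcat{K}(k,\{F,\Delta A\})\cong[\bcat{D},\Cat](F,\Delta\bcat{K}(k,A))\cong\bcat{K}(k,A)$ naturally in $k$, whence $\{F,\Delta A\}\cong A$ by Yoneda; the reverse implication is the special case $\bcat{K}=\Cat$.

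It remains to connect these to (1). For (1) $\Rightarrow$ (3) I would work inside the complete 2-category $\Cat$: the terminal object of $\Cat/A$ is $(\id_A)$, and any $F$-weighted limit of a constant diagram at a terminal object is again terminal, so the created limit $\{F,\Delta(\id_A)\}$ is $(\id_A)$; since $\{F,\Delta A\}$ exists in $\Cat$, creation along $\Cat/A\to\Cat$ forces $A\cong\{F,\Delta A\}$, giving (3). The delicate direction, and the step I expect to be the main obstacle, is (4) $\Rightarrow$ (1). Given $S\colon\bcat{D}\to\bcat{K}/A$, I would present it as a pair $(S',\sigma)$ with $S'=US$ and $\sigma\colon S'\Rightarrow\Delta A$ a cone, assume $\{F,S'\}$ exists downstairs, and form the candidate $L=(\{F,S'\},p)$ where $p\colon\{F,S'\}\to\{F,\Delta A\}\cong A$ is induced by $\sigma$ together with (4).

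The crux of (4) $\Rightarrow$ (1) is the identification of the slice hom-categories as strict fibres,
\[
(\bcat{K}/A)(\bar k,S-)\;\cong\;\bcat{K}(k,S'-)\times_{\Delta\bcat{K}(k,A)}\Delta 1
\]
in $[\bcat{D},\Cat]$, for $\bar k=(k,g)\in\bcat{K}/A$, together with the observation that $[\bcat{D},\Cat](F,-)\cong\{F,-\}$, being representable, is continuous and so preserves this pullback. Combining this with $[\bcat{D},\Cat](F,\Delta\bcat{K}(k,A))\cong\bcat{K}(k,A)$ (from (3)) and $[\bcat{D},\Cat](F,\Delta 1)\cong 1$ would yield $(\bcat{K}/A)(\bar k,L)\cong[\bcat{D},\Cat](F,(\bcat{K}/A)(\bar k,S-))$ naturally in $\bar k$, exhibiting $L$ as $\{F,S\}$ in $\bcat{K}/A$ with $UL=\{F,S'\}$ on the nose; since (as noted just before the theorem) the two notions of creation coincide for slice forgetful 2-functors, producing this lift with invertible comparison suffices. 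The points I anticipate needing the most care are pinning down the variances and the induced map $p$ exactly, and verifying that the displayed expression really is the strict 2-categorical fibre computing the slice hom-category, so that a representable functor genuinely preserves it.
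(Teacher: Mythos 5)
Your proposal is correct, and on the two substantive implications it coincides with the paper's proof: for (3)$\Rightarrow$(4) you give the same Yoneda argument, and for (4)$\Rightarrow$(1) you build the same candidate lift $(\{F,S'\},p)$ with $p$ induced by the cone and $\{F,\Delta A\}\cong A$, describe the strict slice hom-categories as a pointwise limit in $[\bcat{D},\Cat]$ (your pullback along $g\colon 1\to\bcat{K}(k,A)$ is the paper's equalizer of $\bcat{K}(B,\{F,\gamma\})$ and the constant functor at $p$, in different clothing), and finish by noting that the representable $[\bcat{D},\Cat](F,-)$ preserves that limit; the identification of the comparison with $\bcat{K}(k,p)$ that you flag as delicate is exactly what the paper leaves as ``easily seen''. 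Where you genuinely diverge is in the routing of the easier implications. The paper proves (1)$\Rightarrow$(2) by factoring the copower $X\times(-)\colon\Cat\to\Cat$ as the right adjoint $X\times(-)\colon\Cat\to\Cat/X$ followed by the forgetful 2-functor, deduces (3) by evaluating at $\Delta 1$, and reaches (5) by applying (4) to $\bcat{K}=\Cat^\op$. You instead prove (1)$\Rightarrow$(3) directly, by creating the limit of the constant diagram at the terminal object $\id_A$ of $\Cat/A$ (legitimate, since $[\bcat{D},\Cat](F,\Delta 1)\cong 1$ forces any such limit to be terminal), prove (2)$\Leftrightarrow$(3) by an end calculation, and organize (3), (5), (6) around the single computation $[\bcat{D},\Cat](F,\Delta X)\cong X^{\colim F}$, so that everything visibly reduces to $\colim F\cong 1$. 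Both decompositions are complete; yours makes the slogan ``$\Cat$-connected means the colimit of the weight is terminal'' the organizing center, at the cost of replacing the paper's purely formal adjunction step by cartesian-closed calculus (which, like the paper's argument, still transfers verbatim to any complete and cocomplete cartesian closed base $\bcat{V}$). One small point to make explicit in your colimit criterion: the isomorphism $X\cong X^{\colim F}$ must be the canonical comparison, i.e.\ natural in $X$, for the adjunction/Yoneda step to yield $\colim F\cong 1$; this is indeed the case here because (3) asserts preservation of a limit, but a merely objectwise family of isomorphisms would not suffice.
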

\begin{proof}
{[(1)$\implies$(2)]}
For any $ X\in\Cat$, copowers by $X$ are given by $X\times (-)\colon \Cat\to\Cat$, which is the composite of the right adjoint 2-functor $X\times  (-)\colon \Cat\to\Cat/ X$ and the forgetful 2-functor $\Cat/ X\to\Cat$.

{[(2)$\implies$(3)]}
Note that we have $1\cong \{F,\Delta 1\}$ in $\Cat$. Since $X\times  (-)\colon\Cat\to\Cat$ preserves the $F$-weighted limit $\{F,\Delta 1\}$, we have $ X\cong \{F,\Delta  X\}$. 

{[(3)$\implies$(4)]}
For any $B\in \bcat{K}$ we have $\bcat{K}(B,A)\cong [\bcat{D},\Cat](F,\Delta\bcat{K}(B,A))$. This shows that $A\in\bcat{K}$ is the weighted limit $\{F,\Delta A\}$.

{[(4)$\implies$(1)]}
Let $T\colon\bcat{D}\to\bcat{K}/A$ be a 2-functor, with the corresponding oplax cone
\[
    \begin{tikzpicture}[baseline=-\the\dimexpr\fontdimen22\textfont2\relax ]
      \node(0) at (0,0) {$\bcat{D}$};
      \node(1) at (2,1) {$\one$};
      \node(2) at (2,-1){$\bcat{K}$.};
      
      \draw [->] (0) to node[auto,labelsize] {} (1);
      \draw [->] (0) to node[auto,swap,labelsize] {$S$} (2);
      \draw [->] (1) to node[auto,labelsize] {$A$} (2);
      \draw [2cell] (1.3,-0.7) to node[auto,swap,labelsize] {$\gamma$} (1.3,0.7);
    \end{tikzpicture}
\]
In particular, $S$ is the composite of $T$ and the forgetful 2-functor $\bcat{K}/A\to\bcat{K}$. Suppose that the weighted limit $\{F,S\}$ exists in $\bcat{K}$. We have a 1-cell $\{F,\gamma\}\colon \{F,S\}\to\{F,\Delta A\}\cong A$ in $\bcat{K}$. We claim that the object $(\{F,\gamma\}\colon\{F,S\}\to A)\in\bcat{K}/A$ is the limit $\{F,T\}$ in $\bcat{K}/A$.
For any $(p\colon B\to A)\in \bcat{K}/A$, the hom category $(\bcat{K}/A)((B,p),(\{F,S\},\{F,\gamma\}))$ is given by the equalizer
\[
    \begin{tikzpicture}[baseline=-\the\dimexpr\fontdimen22\textfont2\relax ]
      \node(0) at (-2,0) {$(\bcat{K}/A)((B,p),(\{F,S\},\{F,\gamma\}))$};
      \node(1) at (3.5,0) {$\bcat{K}(B,\{F,S\})$};
      \node(2) at (7.5,0) {$\bcat{K}(B,A)$,};      

      \draw [->] (0) to node[auto,labelsize] {} (1);
      \draw [->, transform canvas={yshift=3}] (1) to node[auto,labelsize] {$\bcat{K}(B,\{F,\gamma\})$} (2);
      \draw [->, transform canvas={yshift=-3}] (1) to node[auto,swap,labelsize] {$\Delta p$} (2);
    \end{tikzpicture}
\]
which is easily seen to be canonically isomorphic to $[\bcat{D},\Cat](F,(\bcat{K}/A)((B,p),T-))$.

[(4)$\implies$(5)] Applying (4) to $1\colon \one\to\Cat^\op$, we obtain $F\ast {\Delta} 1\cong 1$ in $\Cat$.

[(5)$\implies$(3)] For any $ X\in\Cat$, we have 
\[
X\cong [1, X]\cong [F\ast {\Delta} 1,  X]\cong [\bcat{D},\Cat](F, [{\Delta}1(-), X])\cong [\bcat{D},\Cat](F,{\Delta} X).
\]

[(5)$\iff$(6)] By $F\ast{\Delta}1 \cong{\Delta}1\ast F$.
\end{proof}

A 2-functor $F\colon\bcat{D}\to \Cat$ is called \emph{$\Cat$-connected} if $F$ satisfies the equivalent conditions of Theorem~\ref{thm:strongly-connected-limits}. Similarly, a weighted limit is \emph{$\Cat$-connected} if its weight is so.
Note that $F\colon\bcat{D}\to\Cat$ is connected (in the sense that $[\bcat{D},\Cat](F,-)\colon[\bcat{D},\Cat]\to\Cat$ preserves small coproducts) if and only if $[\bcat{D},\Cat](F,\Delta  X)\cong  X$ for any small discrete category $X$, or equivalently just for $X=1+1$; on the other hand it is $\Cat$-connected if this holds for all small categories $X$.

\begin{remark}
Theorem~\ref{thm:strongly-connected-limits} can be proved more generally for categories enriched over a complete and cocomplete cartesian closed category $\bcat{V}$ in place of $\Cat$, indeed the proof carries over essentially word-for-word upon replacing each instance of $\Cat$ by $\bcat{V}$.
\end{remark}

We now give a few simple results about $\Cat$-connected weights in order to clarify the scope of the notion.

\begin{proposition}
If $\bcat{D}$ has a terminal object, then $F\colon \bcat{D}\to\Cat$ is $\Cat$-connected if and only if $F$ preserves the terminal object. 
\end{proposition}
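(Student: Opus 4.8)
The plan is to reduce everything to the conical-colimit characterization~(6) in Theorem~\ref{thm:strongly-connected-limits}, which says that $F$ is $\Cat$-connected precisely when $\Delta 1\ast F\cong 1$. The key point is that a terminal object $\top$ of the 2-category $\bcat{D}$ is terminal in the enriched sense, so that each hom-category $\bcat{D}(d,\top)$ equals the terminal category $1$; consequently the representable weight $\bcat{D}(-,\top)\colon\bcat{D}^\op\to\Cat$ is nothing but the constant weight $\Delta 1$.

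With this in hand I would invoke the co-Yoneda (density) isomorphism for weighted colimits, $\bcat{D}(-,d)\ast F\cong Fd$, which holds for any 2-functor $F\colon\bcat{D}\to\Cat$ and any object $d\in\bcat{D}$. Taking $d=\top$ and using $\bcat{D}(-,\top)\cong\Delta 1$ yields $\Delta 1\ast F\cong F\top$; that is, the conical colimit of $F$ is simply its value at the terminal object. Since $F$ preserves the terminal object of $\bcat{D}$ exactly when $F\top\cong 1$, the chain of equivalences
\[
F\ \text{is $\Cat$-connected}\iff \Delta 1\ast F\cong 1\iff F\top\cong 1\iff F\ \text{preserves the terminal object}
\]
establishes both directions simultaneously.

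The only point requiring care---and the closest thing to an obstacle---is the identification of the representable $\bcat{D}(-,\top)$ with $\Delta 1$, which rests on reading ``terminal object'' in the enriched sense (so that $\bcat{D}(d,\top)=1$, rather than $\top$ merely receiving a 1-cell from each object); once this is granted, everything is a direct application of the co-Yoneda lemma and Theorem~\ref{thm:strongly-connected-limits}. One could alternatively verify $\Delta 1\ast F\cong F\top$ by hand, checking that the cocone with components $F(d\to\top)\colon Fd\to F\top$ is universal, but routing through the density formula avoids this computation entirely.
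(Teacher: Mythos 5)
Your proof is correct and takes essentially the same approach as the paper's: the paper's one-line proof invokes characterization (6) of Theorem~\ref{thm:strongly-connected-limits} together with the observation that the colimit of $F$ is $F(\top)$ when $\bcat{D}$ has a terminal object, which is exactly the identification $\Delta 1\cong\bcat{D}(-,\top)$ plus co-Yoneda that you spell out. Your remark that ``terminal object'' must be read in the $\Cat$-enriched sense is a fair point of care, consistent with the paper's implicit usage.
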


\begin{proof}
If $\bcat{D}$ has a terminal object $1$ then the colimit of $F$ is $F(1)$.
\end{proof}

\begin{proposition}\label{prop:strongly-connected-ordinary}
Let $\bcat{C}$ be a small ordinary category, and $G\colon\bcat{C}\to\Set$ a functor. This determines a 2-functor $G_d\colon\bcat{C}_{ld}\to\Cat$, where now $\bcat{C}_{ld}$ is regarded as a locally discrete 2-category. This $G_d$ sends an object $C$ to the discrete category $G(C)_d$ with object-set $G(C)$.
Then $G_d$ is $\Cat$-connected if and only if the corresponding $G$ is connected.
\end{proposition}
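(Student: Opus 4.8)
The plan is to recognize $\Cat$-connectedness of $G_d$ as the vanishing of a colimit, via the characterization (6) of Theorem~\ref{thm:strongly-connected-limits}: a weight $F\colon\bcat{D}\to\Cat$ is $\Cat$-connected exactly when its conical colimit $\Delta 1\ast F$ is the terminal category $1$. Taking $\bcat{D}=\bcat{C}_{ld}$ and $F=G_d$, the proposition reduces to showing that the conical colimit of $G_d$ is the discrete category $(\colim G)_d$ on the ordinary colimit of $G$ in $\Set$. Granting this, we are done: $(\colim G)_d\cong 1$ in $\Cat$ if and only if $\colim G\cong 1$ in $\Set$, and the latter — that $\colim G$, the set of connected components of the category of elements of $G$, is a single point — is precisely the assertion that $G$ is connected (in the sense that $[\bcat{C},\Set](G,-)$ preserves small coproducts, by the same elementary argument recorded after the definition of $\Cat$-connected).

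To establish the colimit computation I would identify the representing object of $\Cat(-,X)$ directly, unwinding the hom-category $[\bcat{C}_{ld},\Cat](G_d,\Delta X)$ for an arbitrary $X\in\Cat$ and exhibiting a natural isomorphism $[\bcat{C}_{ld},\Cat](G_d,\Delta X)\cong\Cat((\colim G)_d,X)$. An object is a 2-natural transformation $\theta\colon G_d\Rightarrow\Delta X$; since $\bcat{C}_{ld}$ carries only identity 2-cells the 2-naturality condition is vacuous, and since each $G_d(C)=G(C)_d$ is discrete every component $\theta_C$ is merely a function $G(C)\to\ob X$. The naturality squares force $\theta_{C'}\circ G(f)=\theta_C$, so objects are exactly cocones $G\Rightarrow\Delta\ob X$, that is, functions $\colim G\to\ob X$. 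A morphism is a modification; local discreteness of $\bcat{C}_{ld}$ together with discreteness of each $G(C)_d$ reduces its data to a family of arrows $m_{C,a}\colon\theta_C(a)\to\theta'_C(a)$ of $X$ subject only to $m_{C,a}=m_{C',G(f)(a)}$, equivalently a function $\colim G\to\mathrm{mor}(X)$ over the pair $(\theta,\theta')$. This is precisely a morphism of the functor category $\Cat((\colim G)_d,X)$, and the correspondence is visibly natural in $X$; hence $\Delta 1\ast G_d\cong(\colim G)_d$.

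I expect the main obstacle to be the bookkeeping of the second paragraph: checking carefully that no two-dimensional data survives, so that the conical colimit coincides with the discrete category on the underlying one-dimensional colimit. Two complementary features do the work, and the delicate point is to keep track of exactly where each is used. Local discreteness of $\bcat{C}_{ld}$ trivializes both the 2-naturality axiom and the part of the modification axiom indexed by 2-cells of the domain, while discreteness of the values $G(C)_d$ makes functors out of them mere assignments on objects and leaves the arrow-components of a modification entirely free (subject only to the relabelling relation $m_{C,a}=m_{C',G(f)(a)}$ coming from 1-cells). Once the isomorphism $[\bcat{C}_{ld},\Cat](G_d,\Delta X)\cong\Cat((\colim G)_d,X)$ is secured, the equivalence with connectedness of $G$ is immediate.
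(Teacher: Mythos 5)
Your proof is correct, and its skeleton is the same as the paper's: both reduce the proposition, via characterization (6) of Theorem~\ref{thm:strongly-connected-limits}, to identifying the conical colimit of $G_d$ with $(\colim G)_d$, after which $\Cat$-connectedness of $G_d$ becomes $\colim G\cong 1$, i.e.\ connectedness of $G$. The difference lies in how that identification is proved. The paper disposes of it in one line: $(-)_d\colon\Set\to\Cat_0$ is a left adjoint (its right adjoint is $\ob$, as in the adjunction string (\ref{eqn:adj-between-Set-and-Cat})), hence preserves colimits, so $\colim(G_d)=(\colim G)_d$. You instead exhibit the representing object directly, unwinding $[\bcat{C}_{ld},\Cat](G_d,\Delta X)$ into cocones (objects) and descent data for modifications (morphisms), and checking the isomorphism with $\Cat((\colim G)_d,X)$ naturally in $X$. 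Your longer route does buy something the paper leaves tacit: condition (6) is about the \emph{conical 2-colimit} $\Delta 1\ast G_d$, whose universal property has a two-dimensional aspect, whereas cocontinuity of the ordinary functor $(-)_d$ only identifies the ordinary colimit in $\Cat_0$; the paper is implicitly using the standard fact that in $\Cat$ (being cocomplete as a 2-category) ordinary conical colimits already satisfy the enriched universal property. Your modification computation verifies exactly that two-dimensional aspect by hand, and makes explicit where local discreteness of $\bcat{C}_{ld}$ and discreteness of the values $G(C)_d$ are each used. The price is length and generality: the adjointness argument is shorter and applies verbatim to any colimit-preserving functor in place of $(-)_d$, while yours is self-contained and elementary.
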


\begin{proof}
Since the functor $(-)_d\colon\Set\to\Cat_0$ preserves colimits, $\colim(G_d)=\colim(G)_d$. 
\end{proof}

\begin{proposition}\label{prop:Cat-connected-D0}
$\Delta 1\colon\bcat{D}\to\Cat$ is $\Cat$-connected if and only if $\bcat{D}_0$ is connected.
%If $\bcat{D}_0$ is connected then $\Delta 1\colon\bcat{D}\to\Cat$ is $\Cat$-connected. 
\end{proposition}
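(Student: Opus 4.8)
The plan is to reduce everything to a single colimit computation by invoking Theorem~\ref{thm:strongly-connected-limits}, in particular its equivalent condition~(6): the weight $\Delta 1\colon\bcat{D}\to\Cat$ is $\Cat$-connected if and only if its conical colimit $\Delta 1\ast\Delta 1$ is the terminal category. Thus the whole statement becomes the assertion that this conical colimit is terminal exactly when $\bcat{D}_0$ is connected, and the substance of the proof is to identify the colimit explicitly.

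To compute it I would describe the representing object of the 2-functor $C\mapsto[\bcat{D},\Cat](\Delta 1,\Delta C)$, whose value is, by the universal property of the conical colimit, $\Cat(\Delta 1\ast\Delta 1,C)$. Unwinding the definition of a 2-natural transformation $\Delta 1\Rightarrow\Delta C$: its component at an object $d$ is a functor $1\to C$, i.e.\ an object $c_d\in C$; 2-naturality with respect to a 1-cell $f\colon d\to d'$ forces $c_d=c_{d'}$; and 2-naturality with respect to the 2-cells of $\bcat{D}$ is vacuous, since both $\Delta 1$ and $\Delta C$ send every 2-cell to an identity. So such a transformation is precisely a function $\ob\bcat{D}\to\ob C$ that is constant on each connected component of the underlying ordinary category $\bcat{D}_0$. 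The analogous bookkeeping for modifications shows that a morphism of $[\bcat{D},\Cat](\Delta 1,\Delta C)$ is a family of morphisms $c_d\to c_d'$ in $C$, again constant on connected components.

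Consequently $[\bcat{D},\Cat](\Delta 1,\Delta C)\cong\Cat\bigl((\pi_0\bcat{D}_0)_d,C\bigr)$ naturally in $C$, where $(\pi_0\bcat{D}_0)_d$ denotes the discrete category on the set of connected components of $\bcat{D}_0$. By (enriched) Yoneda this identifies $\Delta 1\ast\Delta 1$ with $(\pi_0\bcat{D}_0)_d$. Finally, a discrete category is terminal exactly when it has a single object, so $\Delta 1\ast\Delta 1\cong 1$ if and only if $\bcat{D}_0$ has one connected component, i.e.\ is connected; together with condition~(6) of Theorem~\ref{thm:strongly-connected-limits} this gives the claim.

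I do not anticipate a genuine obstacle: the content is the explicit colimit computation, and the only delicate point will be checking that the two-dimensional structure of $\bcat{D}$ — its nonidentity 1-cells and all of its 2-cells — contributes nothing beyond the connectedness relation on objects, precisely because $\Delta 1$ and $\Delta C$ are constant and collapse every 2-cell to an identity. As an alternative route one could instead run the same computation through condition~(3), testing the isomorphism $X\cong[\bcat{D},\Cat](\Delta 1,\Delta X)\cong X^{\pi_0\bcat{D}_0}$ on a discrete two-object category to extract the ``only if'' direction; I prefer condition~(6) because discreteness of the colimit makes the terminality criterion immediate and avoids choosing a test object.
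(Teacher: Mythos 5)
Your proposal is correct and takes essentially the same route as the paper: the paper's own one-line proof also reduces the statement, via condition~(6) of Theorem~\ref{thm:strongly-connected-limits}, to the observation that the conical colimit of $\Delta 1\colon\bcat{D}\to\Cat$ is the discrete category on the set of connected components of $\bcat{D}_0$, which is terminal exactly when that set is a singleton; you simply carry out in detail the colimit computation that the paper leaves implicit. The only cosmetic slip is writing $[\bcat{D},\Cat](\Delta 1,\Delta C)$ where the universal property of the colimit literally gives $[\bcat{D}^\op,\Cat](\Delta 1,\Delta C)$, but this is harmless since connectedness of $\bcat{D}_0$ is self-dual (and condition~(3) of the theorem uses the non-op hom anyway).
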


\begin{proof}
The colimit of $\Delta 1\colon\bcat{D}\to\Cat$ is 
the discrete category corresponding to the set of connected components of $\bcat{D}_0$.
%a quotient of the colimit of $\Delta 1\colon\bcat{D}_0\to\Cat$. The terminal category $1$ has no proper quotients. 
\end{proof}

\begin{example}
Equifiers are $\Cat$-connected: here it is easiest to verify directly that equifiers in $\Cat$ commute with copowers. Similarly, one verifies that Eilenberg--Moore objects of monads and of comonads are $\Cat$-connected.
Equalizers and pullbacks are $\Cat$-connected by Proposition~\ref{prop:Cat-connected-D0}. 
\end{example}

\begin{example}
\label{ex:products-etc}
Non-trivial products are not $\Cat$-connected: they are not even connected. Powers by a category $X$ are limits weighted by $X\colon \one\to\Cat$; since the colimit of such a weight is just $X$, powers by $X$ are $\Cat$-connected if and only if $X=1$. Inserters, comma objects and oplax limits of 1-cells are not $\Cat$-connected: in particular they are not preserved by the 2-functor $\mathbb{N}\colon\mathbf{1}\to\Cat$ which picks out the additive monoid $\mathbb{N}$ of natural numbers. More generally, inserters are not preserved by $X\colon \mathbf{1}\to\Cat$ if $X$ has a non-identity endomorphism, while comma objects and oplax limits of 1-cells are not preserved by $X\colon \mathbf{1}\to\Cat$ unless $X$ is discrete. 
\end{example}

As anticipated at the beginning of the section, we now take $\mathcal{U}$ to be $\mathcal{U}_1$, and use the resulting notion of $\CAT$-connected limit, involving a large 2-category $\bcat{D}$ and a 2-functor $F\colon \bcat{D}\to\CAT$ as weight.
Since the inclusion $\Cat\to\CAT$ preserves small limits and small colimits, $\Cat$-connected limits are also $\CAT$-connected.
As an immediate consequence of Theorems~\ref{thm:Enr-preserves-limits} and~\ref{thm:strongly-connected-limits}, we have:

\begin{corollary}\label{cor:Enr-Cat-connected}
The 2-functor $\Enr\colon\Bicat\to\twoCAT$ preserves all $\CAT$-connected limits which happen to exist in $\Bicat$.
\end{corollary}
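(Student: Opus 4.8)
The plan is to use the factorization $\Enr = U\circ\Enr_\one$ associated with $\Enr_\one\colon\Bicat\to\twoCAT/\Setlc$ from Section~\ref{sec:limit-pres}, where $U\colon\twoCAT/\Setlc\to\twoCAT$ is the forgetful 2-functor, and to combine the strong preservation property of $\Enr_\one$ coming from Theorem~\ref{thm:Enr-preserves-limits} with the behaviour of $U$ on $\CAT$-connected limits supplied by Theorem~\ref{thm:strongly-connected-limits}. So fix a large 2-category $\bcat{D}$, a $\CAT$-connected weight $F\colon\bcat{D}\to\CAT$, and a 2-functor $S\colon\bcat{D}\to\Bicat$ for which the weighted limit $\{F,S\}$ happens to exist in $\Bicat$; the goal is to exhibit a canonical isomorphism $\Enr\{F,S\}\cong\{F,\Enr\circ S\}$ in $\twoCAT$.

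First I would invoke Theorem~\ref{thm:Enr-preserves-limits} to push the limit through $\Enr_\one$: since $\twoCAT/\Setlc$ has all large weighted limits, $\{F,\Enr_\one\circ S\}$ exists there, and the theorem gives $\Enr_\one\{F,S\}\cong\{F,\Enr_\one\circ S\}$. Next I would specialise Theorem~\ref{thm:strongly-connected-limits}, read with the single universe taken to be $\mathcal{U}_1$ (so that ``$\CAT$-connected'' is the operative notion and ``locally small'' means ``locally large''). Condition~(1) of that theorem asserts that a $\CAT$-connected weight $F$ has all its $F$-weighted limits created by the forgetful 2-functor $\bcat{K}/A\to\bcat{K}$, for every locally large 2-category $\bcat{K}$ and object $A\in\bcat{K}$. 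Taking $\bcat{K}=\twoCAT$ and $A=\Enr(\one)=\Setlc$ identifies $U$ as precisely such a forgetful 2-functor.

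It then remains to concatenate. Because $\twoCAT$ is complete for large weighted limits, the limit $\{F,\Enr\circ S\}$ exists downstairs; creation by $U$ lifts it uniquely to a limit in $\twoCAT/\Setlc$, which by the uniqueness of weighted limits must be $\{F,\Enr_\one\circ S\}$, so that $U\{F,\Enr_\one\circ S\}\cong\{F,\Enr\circ S\}$. Combining this with the previous step yields
\[
\Enr\{F,S\} \;=\; U\,\Enr_\one\{F,S\} \;\cong\; U\{F,\Enr_\one\circ S\} \;\cong\; \{F,\Enr\circ S\},
\]
as required, and the construction is visibly natural in the data.

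I expect the only delicate points to be bookkeeping rather than substance. One is the size discipline set up at the start of Section~\ref{sec:limit-pres}: one must confirm that $\twoCAT$ is locally large, so that Theorem~\ref{thm:strongly-connected-limits} genuinely applies with $\bcat{K}=\twoCAT$, and that $\twoCAT$ carries the large weighted limits used above. The other is the passage from the ``creation'' phrasing of condition~(1) to the ``preservation'' statement actually invoked; this is painless here because a forgetful 2-functor from a slice reflects identities and lifts invertible 1-cells, so the two formulations of creation coincide, and creation of a limit whose image already exists downstairs yields its preservation.
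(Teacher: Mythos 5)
Your proof is correct and follows essentially the same route as the paper: the corollary is stated there as an immediate consequence of Theorem~\ref{thm:Enr-preserves-limits} applied to the factorization $\Enr = U\circ\Enr_\one$, together with condition~(1) of Theorem~\ref{thm:strongly-connected-limits} (read with $\mathcal{U}=\mathcal{U}_1$, $\bcat{K}=\twoCAT$, $A=\Setlc$) giving creation, hence preservation, of $\CAT$-connected limits by the forgetful 2-functor $U$. Your attention to the creation-versus-preservation bookkeeping and to the size conditions matches the paper's own remarks (it notes precisely that creation plus existence of the limit in $\twoCAT$ yields the required preservation by $U$).
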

%\ingrey{(A $\CAT$-connected limit is defined by a large 2-category $\bcat{D}$ and a 2-functor $F\colon \bcat{D}\to \CAT$.)}

\begin{example}\label{ex:comonads}
Eilenberg--Moore objects of comonads are $\Cat$-connected (as well as $\CAT$-connected), and exist in $\Bicat$ by the results of \cite{lax, LackShulman}, thus they are preserved by $\Enr$. In more detail, a comonad $G$ in $\Bicat$ on a bicategory $\bcat{B}$ consists of a comonad $G=G_{a,b}$ on each hom-category $\bcat{B}(a,b)$, together with 2-cells $G_2\colon Gg.Gf\to G(gf)$ for all $f\colon a\to b$ and $g\colon b\to c$, and 2-cells $G_0\colon 1_{Ga}\to G1_a$ for all objects $a$, subject to various conditions, which say that the $G_{a,b}$, the $G_2$ and the $G_0$ can be assembled into an identity-on-objects lax functor $\bcat{B}\to\bcat{B}$, in such a way that the counits and comultiplications for the comonads become icons. The Eilenberg--Moore object $\bcat{B}^G$ is the bicategory with the same objects as $\bcat{B}$, and with hom-category $\bcat{B}^G(a,b)$ given by the Eilenberg--Moore category $\bcat{B}(a,b)^{G_{a,b}}$ of $G_{a,b}$. Corollary~\ref{cor:Enr-Cat-connected} then says that $\enCat{\bcat{B}^G}$ is the Eilenberg--Moore 2-category for the induced 2-comonad on $\enCat{\bcat{B}}$.
\end{example}

\section{Oplax limits and fibrations}\label{sect:oplax}

A 1-cell $f\colon A\to B$ in a 2-category $\bcat{K}$ is called a {\em fibration}, when $\bcat{K}(C,f)\colon\bcat{K}(C,A)\to\bcat{K}(C,B)$ is a Grothendieck fibration for each $C\in\bcat{K}$, and 
\[
    \begin{tikzpicture}[baseline=-\the\dimexpr\fontdimen22\textfont2\relax ]
      \node(01) at (0,1) {$\bcat{K}(C,A)$};
      \node(11) at (3,1) {$\bcat{K}(D,A)$};
      \node(00) at (0,-1){$\bcat{K}(C,B)$};
      \node(10) at (3,-1){$\bcat{K}(D,B)$};
      \draw [->] (01) to node[auto,labelsize] {$\bcat{K}(c,A)$} (11);
      \draw [->] (11) to node[auto,labelsize] {$\bcat{K}(D,f)$} (10);
      \draw [->] (01) to node[auto,swap,labelsize] {$\bcat{K}(C,f)$} (00);
      \draw [->] (00) to node[auto,swap, labelsize] {$\bcat{K}(c,B)$} (10);
    \end{tikzpicture}
\]
is a morphism of fibrations for each $c\colon D\to C$ in $\bcat{K}$, in the sense that $\bcat{K}(c,A)$ sends cartesian morphisms (with respect to $\bcat{K}(C,f)$) to cartesian morphisms (with respect to $\bcat{K}(D,f)$).
If $q\colon F\to B$ and $p\colon E\to B$ are fibrations in $\bcat{K}$ with the common codomain $B$, then a 1-cell $r\colon (F,q)\to (E,p)$ in $\bcat{K}/B$ is a \emph{morphism of fibrations} if for each $C\in \bcat{K}$, $\bcat{K}(C,r)$ is a morphism of fibrations, i.e., preserves cartesian morphisms.

As explained by Street \cite{Street-YonedaFibrations}, 
these notions can be reformulated if the 2-category $\bcat{K}$ has oplax limits of 1-cells, as we shall henceforth suppose. Recall that the oplax limit of a 1-cell $f\colon A\to B$ in $\bcat{K}$ is the universal diagram
\[
    \begin{tikzpicture}[baseline=-\the\dimexpr\fontdimen22\textfont2\relax ]
      \node(0) at (0,0) {$B/f$};
      \node(1) at (2,1) {$A$};
      \node(2) at (2,-1){$B$};
      \draw [->] (0) to node[auto,labelsize] {$u_f$} (1);
      \draw [->] (0) to node[auto,swap,labelsize] {$v_f$} (2);
      \draw [->] (1) to node[auto,labelsize] {$f$} (2);
      \draw [2cell] (1.3,-0.7) to node[auto,swap,labelsize] {$\lambda_f$} (1.3,0.7);
    \end{tikzpicture}
\]
wherein we often drop the subscripts $f$ unless multiple oplax limits are being used. 

If $\bcat{K}=\Cat$, then these oplax limits are comma categories, as the notation suggests. On the other hand, we have:

\begin{example}\label{ex:Xc-Cat}
Let $X$ be a small set, seen as a chaotic bicategory $X_c$ (that is, $(X_c)_{ld}$ or equivalently $(X_c)_{lc}$). To give an $X_c$-enriched category is just to give a set of objects with a map into $X$. Similar calculations involving $X_c$-enriched functors and natural transformations show that the diagram
\[
    \begin{tikzpicture}[baseline=-\the\dimexpr\fontdimen22\textfont2\relax ]
      \node(0) at (0,0) {$\enCat{X_c}$};
      \node(1) at (2,1) {$\mathbf{1}$};
      \node(2) at (2,-1){$\Setlc$};
      
      \draw [->] (0) to node[auto,labelsize] {} (1);
      \draw [->] (0) to node[auto,swap,labelsize] {$\ob$} (2);
      \draw [->] (1) to node[auto,labelsize] {$X$} (2);
      \draw [2cell] (1.3,-0.7) to node[auto,swap,labelsize] {} (1.3,0.7);
    \end{tikzpicture}
\]
is an oplax limit in $\twoCAT$; in other words, the 2-category $\enCat{X_c}$ is isomorphic to the slice 2-category $\Setlc/X$; this in turn is isomorphic to $(\Set/X)_{lc}$.
\end{example}
The fibrations in $\bcat{K}$ with codomain $B$ can be understood in terms of a 2-monad $T_B$ on $\bcat{K}/B$ whose underlying 2-functor maps $f\colon A\to B$ to $ v_f\colon B/f\to B$;  the component at $f\colon A\to B$ of its unit is the unique map $d=d_f\colon A\to B/f$ with $ud=1_A$, $vd=f$, and $\lambda d$ equal to the identity 2-cell on $f$.  This 2-monad is {\em colax idempotent} (has the dual of the ``Kock--Z\"oberlein property"), and so an object $f\colon A\to B$ of $\bcat{K}/B$ admits the structure of a pseudo $T_B$-algebra if and only if $d\colon (A,f)\to (B/f,v_f)$ has a right adjoint in $\bcat{K}/B$; and this in turn is the case if and only if $f$ is a fibration. See for example \cite[Proposition~3(a)]{Street-YonedaFibrations} and \cite[Theorem 2.7]{Weber-Yoneda}.

Also, if $q\colon F\to B$ and $p\colon E\to B$ are fibrations in $\bcat{K}$, then a 1-cell $r\colon (F,q)\to (E,p)$ in $\bcat{K}/B$ admits the structure of a (pseudo) morphism of pseudo $T_B$-algebras if and only if the mate of the identity 2-cell
 \[
    \begin{tikzpicture}[baseline=-\the\dimexpr\fontdimen22\textfont2\relax ]
      \node(01) at (0,-1) {$(B/q,v_q)$};
      \node(00) at (0,1) {$(F,q)$};
      \node(11) at (3,-1) {$(B/p,v_p)$};
      \node(10) at (3,1) {$(E,p)$};
      
      \draw [->] (01) to node[auto,labelsize,swap] {$T_Br$} (11);
      \draw [->] (10) to node[auto,labelsize,right] {$d_p$} (11);
      \draw [->] (00) to node[auto,swap,labelsize,left] {$d_q$} (01);
      \draw [->] (00) to node[auto,labelsize] {$r$} (10);
    \end{tikzpicture}
\]
is invertible; and this in turn is the case if and only if $r$ is a morphism of fibrations.
 
 Likewise, the {\em strict} $T_B$-algebras are the {\em split} fibrations in $\bcat{K}$: those $f\colon A\to B$ for which each $\bcat{K}(C,f)\colon \bcat{K}(C,A)\to\bcat{K}(C,B)$ is a split fibration, and each $\bcat{K}(c,A)\colon$ $\bcat{K}(C,A)\to\bcat{K}(D,A)$ preserves the {\em chosen} cartesian lifts. 
 
 In particular, $v\colon B/f\to B$ is a split fibration for any $f\colon A\to B$, and $d$ exhibits $v\colon B/f\to B$ as the free (split) fibration on $f$. Thus if $p\colon E\to B$ is a fibration, and $g\colon A\to E$ defines a morphism $(A,f)\to (E,p)$ in $\bcat{K}/B$, there is an essentially unique morphism of fibrations $r\colon (B/f,v)\to (E,p)$ extending $g$. 
 %(If $q\colon F\to B$ and $p\colon E\to B$ are fibrations in $\bcat{K}$ with the common codomain $B$, then a 1-cell $r\colon (F,q)\to (E,p)$ in $\bcat{K}/B$ is a \emph{morphism of fibrations} if for each $C\in \bcat{K}$, $\bcat{K}(C,r)$ is a morphism of fibrations, i.e., preserves cartesian morphisms.)

\begin{proposition}
\label{prop:Enr-fib}
    The 2-functor $\Enr_\one\colon \Bicat\to \twoCAT/\Setlc$ factors through the locally full sub-2-category of $\twoCAT/\Setlc$ having
    \begin{itemize}
        \item the fibrations in $\twoCAT$ to $\Setlc$ as objects, and 
        \item the fibration morphisms as 1-cells.
    \end{itemize}
\end{proposition}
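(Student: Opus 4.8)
The plan is to verify the two defining requirements separately: that each $\ob\colon\enCat{\bcat{B}}\to\Setlc$ is a fibration in $\twoCAT$, and that for each lax functor $L\colon\bcat{B}\to\bcat{C}$ the induced 2-functor $\Enr(L)\colon\enCat{\bcat{B}}\to\enCat{\bcat{C}}$ over $\Setlc$ is a morphism of fibrations. In both cases I would exploit the characterization of fibrations (and their morphisms) in terms of the colax-idempotent 2-monad $T_{\Setlc}$ on $\twoCAT/\Setlc$ recalled above, rather than unwinding the representable definition over all test objects by hand. The geometric content throughout is that cartesian lifts against $\ob$ are computed by \emph{reindexing}: given a $\bcat{B}$-category $\ecat{X}$ and a function $\phi\colon S\to\ob(\ecat{X})$, one forms the $\bcat{B}$-category $\phi^\ast\ecat{X}$ with object-set $S$, extents $|s|=|\phi(s)|$, hom-objects $(\phi^\ast\ecat{X})(s,s')=\ecat{X}(\phi s,\phi s')$, and identities and composition inherited from $\ecat{X}$; the evident $\bcat{B}$-functor $\bar\phi\colon\phi^\ast\ecat{X}\to\ecat{X}$, which is $\phi$ on objects and the identity on hom-objects, is the cartesian lift.

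For the first requirement I would show that $d\colon\enCat{\bcat{B}}\to\Setlc/\ob$ admits a right adjoint in $\twoCAT/\Setlc$, which by the cited characterization is equivalent to $\ob$ being a fibration. Since $\Setlc$ is locally chaotic, an object of the oplax limit $\Setlc/\ob$ is simply a triple $(\ecat{X},S,\phi\colon S\to\ob\ecat{X})$, and $d$ sends $\ecat{X}$ to $(\ecat{X},\ob\ecat{X},\id)$. I would define the right adjoint $R$ by $R(\ecat{X},S,\phi)=\phi^\ast\ecat{X}$; this lands over $\Setlc$ because $\ob(\phi^\ast\ecat{X})=S$, and $Rd=\id$ on the nose because $\id^\ast\ecat{X}=\ecat{X}$. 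The counit at $(\ecat{X},S,\phi)$ is the pair $(\bar\phi,\id_S)$, whose comma-square compatibility is immediate, and the two triangle identities reduce to the equalities $\id^\ast\ecat{X}=\ecat{X}$ and $\bar\phi$ being the identity on hom-objects. The local chaoticity of $\Setlc$ makes all the 2-cells involved unique, so there is essentially nothing to check at the level of 2-cells.

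For the second requirement I would use that $\Enr(L)$ sends $\ecat{X}$ to the $\bcat{C}$-category $L_\ast\ecat{X}$ with the same object-set and hom-objects $L(\ecat{X}(x,x'))$, so it is indeed a 1-cell of $\twoCAT/\Setlc$. By the criterion recalled above, such a 1-cell between the (pseudo) $T_{\Setlc}$-algebras of the first part is a morphism of fibrations precisely when the mate of the canonical commuting square built from the two units $d$ is invertible; equivalently, when $\Enr(L)$ commutes with the reindexing right adjoints $R$. Here the key observation is that reindexing and change of base along $L$ commute strictly: both $L_\ast(\phi^\ast\ecat{X})$ and $\phi^\ast(L_\ast\ecat{X})$ are the $\bcat{C}$-category on object-set $S$ with extents $L|\phi(s)|$ and hom-objects $L(\ecat{X}(\phi s,\phi s'))$, hence are literally equal. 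Thus the comparison mate is an identity, in particular invertible, and $\Enr(L)$ is a morphism of fibrations.

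The main obstacle is not conceptual but organizational: correctly fixing the variance so that $\ob$ comes out as a fibration (rather than an opfibration), matching the oplax-limit orientation $\lambda\colon v\Rightarrow fu$ with the reindexing direction, and then confirming that the adjunction $d\dashv R$ genuinely lives in the slice 2-category $\twoCAT/\Setlc$ and not merely in $\twoCAT$. Once the reindexing construction is in place, both the adjunction and the mate computation are forced, and the factorization of $\Enr_\one$ through the claimed locally full sub-2-category is then immediate, since $\Enr_\one$ is a 2-functor and that sub-2-category imposes no condition on 2-cells.
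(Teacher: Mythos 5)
Your proposal is correct, but it follows a genuinely different route from the paper's. The paper argues concretely: it specializes Proposition~\ref{prop:fibration-concrete} to $\twoCAT$ to describe fibrations there elementarily (cartesian 1-cells are those whose hom-squares are pullbacks, a fibration is a 2-functor admitting cartesian lifts, and fibration morphisms are the 2-functors preserving cartesian 1-cells), and then makes the key observation that a $\bcat{B}$-functor is cartesian with respect to $\ob(-)\colon\enCat{\bcat{B}}\to\Setlc$ exactly when it is \emph{fully faithful}; the claim then follows at once, since cartesian lifts of a function $\phi\colon S\to\ob(\ecat{X})$ exist (they are precisely your reindexings $\bar\phi\colon\phi^\ast\ecat{X}\to\ecat{X}$) and $\Enr(L)$ preserves full faithfulness because a lax functor preserves invertible 2-cells. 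You instead invoke the Street--Weber characterization recalled in Section~\ref{sect:oplax}: $\ob$ is a fibration iff the unit $d$ into the oplax limit has a right adjoint in the slice, and a 1-cell between fibrations is a fibration morphism iff the relevant mate is invertible. Both proofs pivot on the same construction, reindexing, but package it differently: in the paper it is the cartesian lift, in yours it is the right adjoint $R$. The paper's route buys brevity and the reusable slogan ``cartesian $=$ fully faithful''; yours buys independence from the concrete description of fibrations in $\twoCAT$, staying entirely inside the 2-monadic formalism already set up for Section~\ref{sect:oplax}, at the cost of the routine verifications you list. One point to make explicit when writing it up: the strict equality $L_\ast(\phi^\ast\ecat{X})=\phi^\ast(L_\ast\ecat{X})$ by itself does not force the mate to be an identity (a priori it could be a non-identity endo-cell of the common composite); you should also note that the mate's component at $(S,\ecat{X},\phi)$ computes to $R(\id_S,L_\ast\bar\phi)$, which is an identity because $\bar\phi$ is the identity on homs and $L$, being functorial on hom-categories, sends identity 2-cells to identity 2-cells. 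This is exactly the ``forced'' computation you anticipate, so the gap is one of exposition rather than substance.
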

\begin{proof}
    First we describe fibrations in $\twoCAT$ explicitly. Given a 2-functor $F\colon \bcat{Y}\to\bcat{X}$ between 2-categories, a 1-cell $h\colon y'\to y$ in $\bcat{Y}$ is called \emph{cartesian} (with respect to $F$) if 
    \[
    \begin{tikzpicture}[baseline=-\the\dimexpr\fontdimen22\textfont2\relax ]
      \node(01) at (0,1) {$\bcat{Y}(z,y')$};
      \node(11) at (4,1) {$\bcat{Y}(z,y)$};
      \node(00) at (0,-1){$\bcat{X}(Fz,Fy')$};
      \node(10) at (4,-1){$\bcat{X}(Fz,Fy)$};

      \draw [->] (01) to node[auto,labelsize] {$\bcat{Y}(z,h)$} (11);
      \draw [->] (11) to node[auto,labelsize] {$F_{z,y}$} (10);
      \draw [->] (01) to node[auto,swap,labelsize] {$F_{z,y'}$} (00);
      \draw [->] (00) to node[auto,swap, labelsize] {$\bcat{X}(Fz,Fh)$} (10);
    \end{tikzpicture}
\]
is a pullback in $\CAT$ for each $z\in\bcat{Y}$. 
Then $F$ is a fibration if and only if, for each object $y\in\bcat{Y}$ and each 1-cell $g\colon x\to Fy$ in $\bcat{X}$, there is a cartesian morphism $\overline{g}\colon g^\ast y\to y$ in $\bcat{Y}$ with $F\overline{g}=g$; such a  $\overline{g}$ is called a \emph{cartesian lifting} of $g$ to $y$.
Moreover, given fibrations $F\colon \bcat{Y}\to \bcat{X}$ and $G\colon\bcat{Z}\to\bcat{X}$ over $\bcat{X}$, a 2-functor $H\colon \bcat{Y}\to\bcat{Z}$ satisfying $F=G\circ H$ is a morphism of fibrations if and only if $H$ preserves cartesian 1-cells.
(This is a special case of Proposition~\ref{prop:fibration-concrete} below, whose proof does not depend on the current proposition.)
%depends only on results preceding the current proposition.)

For any $\bcat{B}\in\Bicat$, a $\bcat{B}$-functor $S\colon \ecat{Y}'\to\ecat{Y}$ is called \emph{fully faithful} when the 2-cell $S_{y,y'}\colon \ecat{Y}(y,y')\to \ecat{Y'}(Sy,Sy')$ in $\bcat{B}$ is invertible for all $y,y'\in \ecat{Y}$. It is easy to see that a $\bcat{B}$-functor is cartesian with respect to $\ob(-)\colon\enCat{\bcat{B}}\to\Setlc$ if it is fully faithful, and indeed by essential uniqueness of cartesian lifts the reverse implication also holds.
%if for all $y,y'\in \ecat{Y}$, $S_{y,y'}$ is an invertible 2-cell of $\bcat{B}$. It is easy to see that fully faithful $\bcat{B}$-functors are cartesian with respect to $\ob(-)\colon \enCat{\bcat{B}}\to \Setlc$. 
The claim follows at once.
%\inred{(It also follows that a $\bcat{B}$-functor is cartesian \emph{if and only if} it is fully faithful, since cartesian liftings are unique up to isomorphism.)}
\end{proof}

\begin{remark}
    In the above proposition, we used fibrations in the 2-category $\twoCAT$, called {\em 2-categorical fibrations} in \cite[I.2.9]{Gray-FCT}. These were also called \emph{2-fibrations} in \cite{Gray-FCT}, but for the purposes of this remark we shall save that name for the more restrictive notion studied by Hermida \cite{Hermida}; see also \cite{Bakovic,Buckley}. In general, $\ob(-)\colon \enCat{\bcat{B}}\to \Setlc$ is not a 2-fibration in the sense of \cite{Hermida}. %They are different from the more restrictive \emph{2-fibrations} between 2-categories considered by Hermida \cite{Hermida}; see also \cite{Bakovic,Buckley}. In general, $\ob(-)\colon \enCat{\bcat{B}}\to \Setlc$ is not a 2-fibration. 
    Indeed, a 2-fibration is a 2-functor which among other things is locally a fibration, but the forgetful functor $\enCat{\bcat{B}}(\ecat{X},\ecat{Y})\to \Setlc(\ob(\ecat{X}),\ob(\ecat{Y}))$ induced by $\ob(-)$ is rarely a fibration of categories.
    %In general, $\ob(-)\colon \enCat{\bcat{B}}\to \Setlc$ is not a 2-fibration in Hermida's sense. Indeed, by \cite[Theorem~2.8]{Hermida}, if $\ob(-)\colon \enCat{\bcat{B}}\to \Setlc$ is a 2-fibration, it has to be locally a fibration, i.e., for any pair of $\bcat{B}$-categories $\ecat{X}$ and $\ecat{Y}$, $\enCat{\bcat{B}}(\ecat{X},\ecat{Y})\to \Setlc(\ob(\ecat{X}),\ob(\ecat{Y}))$ must be a fibration of categories. Since $\Setlc(\ob(\ecat{X}),\ob(\ecat{Y}))$ is a chaotic category, this implies that if there is some $\bcat{B}$-functor $\ecat{X}\to \ecat{Y}$, then any function $\ob(\ecat{X})\to\ob(\ecat{Y})$ must underlie a $\bcat{B}$-functor $\ecat{X}\to \ecat{Y}$.
\end{remark}

In general, oplax limits of 1-cells are not preserved by the projection $\bcat{K}/B\to\bcat{K}$, but to some extent fibrations can be used to remedy this, as the following result shows. 

\begin{proposition}\label{prop:oplax-limit-fibration}
Let $p\colon A\to B$ be a fibration in $\bcat{K}$, and consider a morphism $g$ in $\bcat{K}/B$ into $p$, and the (essentially unique) induced morphism $r$ of fibrations, as below
\[
    \begin{tikzpicture}[baseline=-\the\dimexpr\fontdimen22\textfont2\relax ]
      \node(1) at (2,1) {$C$};
      \node(2) at (2,-1){$A$};
      \node(3) at (4,0){$B$};
      
      \draw [->] (1) to node[auto,swap,labelsize] {$g$} (2);
      \draw [->] (1) to node[auto,labelsize] {$pg$} (3);
      \draw [->] (2) to node[auto,swap,labelsize] {$p$} (3);
    \end{tikzpicture}
    \qquad\qquad
    \begin{tikzpicture}[baseline=-\the\dimexpr\fontdimen22\textfont2\relax ]
      \node(1) at (2,1) {$B/pg$};
      \node(2) at (2,-1){$A$};
      \node(3) at (4,0){$B$.};
      
      \draw [->] (1) to node[auto,swap,labelsize] {$r$} (2);
      \draw [->] (1) to node[auto,labelsize] {$v_{pg}$} (3);
      \draw [->] (2) to node[auto,swap,labelsize] {$p$} (3);
    \end{tikzpicture}
\]
Then the oplax limit of $g$ in $\bcat{K}$ is the oplax limit of $r$ in $\bcat{K}/B$.
\end{proposition}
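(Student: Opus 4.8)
The plan is to verify the universal property of the oplax limit of $r$ in $\bcat{K}/B$ directly, by the Yoneda lemma. I claim the apex is $(A/g,\,p v_g)$, where $A/g$ is the oplax limit of $g$ in $\bcat{K}$, with projections $u_g\colon A/g\to C$, $v_g\colon A/g\to A$ and $\lambda_g\colon v_g\Rightarrow g u_g$; the projection to $(A,p)$ will be $v_g$, and the projection to $(B/pg,v_{pg})$ a 1-cell $P$ read off below. Concretely, for each $(X,x)\in\bcat{K}/B$ I would construct an isomorphism, natural in $(X,x)$, between the hom-category $(\bcat{K}/B)\big((X,x),(A/g,p v_g)\big)$ and the category of oplax cones from $(X,x)$ over $r$ in $\bcat{K}/B$; evaluating at $(A/g,p v_g)$ and the identity then produces $P$ and the comparison 2-cell.

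First I would unravel both sides. By the universal property of $A/g$, an object of the left-hand hom-category is a triple $(c\colon X\to C,\ a\colon X\to A,\ \phi\colon a\Rightarrow gc)$ with $pa=x$. On the right, an oplax cone over $r$ consists of morphisms $\alpha\colon(X,x)\to(B/pg,v_{pg})$ and $\beta\colon(X,x)\to(A,p)$ in $\bcat{K}/B$ together with a 2-cell $\theta\colon\beta\Rightarrow r\alpha$ there; by the universal property of $B/pg$ the datum $\alpha$ amounts to a pair $(c'\colon X\to C,\ \psi\colon x\Rightarrow pg\,c')$, where $c'=u_{pg}\alpha$ and $\psi=\lambda_{pg}\alpha$, while $\theta$ is a 2-cell $\beta\Rightarrow r\alpha$ in $\bcat{K}$ with $p\theta=\id_x$.

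The crux is to pass between the 2-cell $\theta$, which is \emph{vertical} (it lies over $\id_x$), and the 2-cell $\phi\colon\beta\Rightarrow gc'$, which lies over $\psi$; this is exactly where the hypotheses that $p$ is a fibration and that $r$ is a morphism of fibrations are used. The free fibration $v_{pg}$ carries a canonical cartesian 2-cell $\hat\psi\colon\alpha\Rightarrow d_{pg}c'$ over $\psi$: in the representable fibration $\bcat{K}(X,v_{pg})$, the comma description of $B/pg$ exhibits $\hat\psi$ as the cartesian lift of $\psi$ with identity $C$-component. Since $r$ preserves cartesian 2-cells, $r d_{pg}=g$ and $pr=v_{pg}$, the composite $r\hat\psi\colon r\alpha\Rightarrow gc'$ is cartesian over $\psi$ with respect to $p$. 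I would then send $(\alpha,\beta,\theta)$ to $\big(c',\,\beta,\,(r\hat\psi)\circ\theta\big)$; conversely, given $(c,a,\phi)$ with $pa=x$, I would set $\psi=p\phi$, recover $\alpha$ and $\hat\psi$, and use the cartesian universal property of $r\hat\psi$ to obtain the unique vertical $\theta\colon a\Rightarrow r\alpha$ with $(r\hat\psi)\circ\theta=\phi$. These assignments are mutually inverse, giving a bijection on objects.

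Finally I would check functoriality and naturality. A 2-cell of oplax cones over $r$ is a compatible pair $(\mu\colon\alpha\Rightarrow\alpha_1,\ \nu\colon\beta\Rightarrow\beta_1)$ in $\bcat{K}/B$; under the universal property of $B/pg$ the datum $\mu$ reduces to $u_{pg}\mu\colon c'\Rightarrow c'_1$, and matching the cone-compatibility equation with the $\lambda_g$-compatibility equation defining a 2-cell of cones over $g$ is a routine diagram chase relying only on the object-level correspondence and the cartesian universal property. Naturality in $(X,x)$ is automatic, as every construction is by whiskering and vertical composition. The one genuinely 2-categorical point, and the main obstacle, is the vertical-versus-non-vertical passage of the previous paragraph; the rest is bookkeeping with the two oplax-limit universal properties.
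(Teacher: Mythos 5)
Your proof is correct and takes essentially the same approach as the paper's: both unfold, representably, the morphisms over $B$ into $A/g$ and the oplax cones over $r$, and both turn on the same cartesian-lift bijection between vertical 2-cells $a\Rightarrow r\alpha$ and arbitrary 2-cells $a\Rightarrow gc$ lying over a given 2-cell into $B$; your cartesian 2-cell $r\hat{\psi}$ is exactly the paper's cartesian lifting $\overline{\beta}\colon \beta^{*}gc\to gc$, which the paper produces from the concrete description of $r$ rather than from preservation of cartesian 2-cells. The only minor adjustment needed is that $rd_{pg}$ agrees with $g$ only up to an invertible vertical 2-cell (since $r$ is just ``essentially unique''), which should be composed into your comparison; this changes nothing.
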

\begin{proof}
%Without loss of generality we may take $\bcat{K}=\Cat$. An object of the oplax limit of $g$ consists of objects $a\in A$, $c\in C$, and a morphism $\alpha\colon a\to gc$. 

%An object of $B/pg$ consists of objects $b\in B$, $c\in C$, and a morphism $\beta\colon b\to pgc$; and $r$ sends this to the domain of the cartesian lifting  $\overline{\beta}\colon\beta^*gc\to gc$. An object of the oplax limit of $r$ in $\bcat{K}/B$ consists of objects $(b,c,\beta)\in B/pg$, $a\in A$, and a morphism $\alpha'\colon a\to \beta^*{gc}$ with $p(\alpha')$ the identity on $pa=b$. But by the fibration property of $p$, to give such an $\alpha'$ is equivalently to give $\alpha\colon a\to gc$ with $p\alpha=\beta$. 

%This gives the desired bijection between the objects of the two oplax limits, and this extends to a bijection between morphisms. 

As usual we write $A/g$ for the oplax limit of $g$ in $\bcat{K}$. We also write $(A,p)/r$ for the oplax limit of $r$ in $\bcat{K}/B$.

A morphism $D\to A/g$ consists of morphisms $a\colon D\to A$, $c\colon D\to C$, and a 2-cell $\alpha\colon a\to gc$. 

A morphism $D\to B/pg$ consists of morphisms $b\colon D\to B$, $c\colon D\to C$, and a 2-cell $\beta\colon b\to pgc$, and composing with $r$ gives the domain of the cartesian lifting $\overline{\beta}\colon \beta^*gc\to gc$ of $\beta$. A morphism $(D,b)\to(A,p)/r$ in $\bcat{K}/B$ consists of $(b,c,\beta)\colon D\to B/pg$, $a\colon D\to A$, and a 2-cell $\alpha'\colon a\to \beta^*gc$ with $p\alpha'$ equal to the identity on $pa=b$. But by the fibration property of $p$, to give such an $\alpha'$ is equivalently to give $\alpha\colon a\to gc$ with $p\alpha=\beta$.

%A morphism $D\to B/pg$ consists of morphisms $b\colon D\to B$, $c\colon D\to C$, and a 2-cell $\beta\colon b\to pgc$, and composing with $r$ gives the domain of the cartesian lifting $\overline{\beta}\colon \beta^*gc\to gc$ of $\beta$. A morphism from $(D,b)\to(A,p)/r$ in $\bcat{K}/B$ consists of $(b,c,\beta)\colon D\to B/pg$, $a\colon D\to A$, and a 2-cell $\alpha'\colon a\to \beta^*gc$ with $p\alpha'$ equal to the identity on $pa=b$. But by the fibration property of $p$, to give such an $\alpha'$ is equivalently to give $\alpha\colon a\to gc$ with $p\alpha=\beta$.

This shows that the one-dimensional aspect of the universal properties of $A/g$ and $(A,p)/r$ agree, and similarly the two-dimensional aspects also agree. 
\end{proof}

We can use this to prove the following key result, already stated in the introduction.

\begin{theorem}\label{thm:enrich-over-slice}
Let $\bcat{B}$ be a bicategory and $\ecat{X}$ a $\bcat{B}$-category. Then the slice 2-category  $\enCat{\bcat{B}}/\ecat{X}$ is isomorphic to $\enCat{(\bcat{B}/\ecat{X})}$ for a bicategory $\bcat{B}/\ecat{X}$. 
\end{theorem}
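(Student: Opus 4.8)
The plan is to realize $\bcat{B}/\ecat{X}$ as an oplax limit in $\Bicat$, transport it through $\Enr_\one$, and then recognise the resulting oplax limit in $\twoCAT/\Setlc$ as the slice $\enCat{\bcat{B}}/\ecat{X}$. Concretely, I first view the $\bcat{B}$-category $\ecat{X}$ as a lax functor $\ecat{X}\colon X_c\to\bcat{B}$, a 1-cell of $\Bicat$, where $X=\ob(\ecat{X})$, and I \emph{define} $\bcat{B}/\ecat{X}$ to be the oplax limit of this 1-cell in $\Bicat$; such oplax limits exist by \cite{lax,LackShulman}. By Theorem~\ref{thm:Enr-preserves-limits}, applying $\Enr_\one$ carries this oplax limit to the oplax limit of $\Enr_\one(\ecat{X})$ in $\twoCAT/\Setlc$. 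It therefore remains only to identify the underlying 2-category of this latter oplax limit with the strict slice $\enCat{\bcat{B}}/\ecat{X}$.

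The engine for this identification is Proposition~\ref{prop:oplax-limit-fibration}, applied in $\bcat{K}=\twoCAT$ with the fibration $\ob\colon\enCat{\bcat{B}}\to\Setlc$ (a fibration by Proposition~\ref{prop:Enr-fib}) in the role of $p$, and with $g$ taken to be the 2-functor $\ecat{X}\colon\one\to\enCat{\bcat{B}}$ that picks out the object $\ecat{X}$. Since $\ob\circ\ecat{X}=X\colon\one\to\Setlc$, the relevant free fibration is $\Setlc/X$, and the proposition then identifies the oplax limit in $\twoCAT$ of $\ecat{X}\colon\one\to\enCat{\bcat{B}}$ with the underlying 2-category of the oplax limit, taken in $\twoCAT/\Setlc$, of the induced morphism of fibrations $r\colon\Setlc/X\to\enCat{\bcat{B}}$. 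The first of these is exactly the strict slice $\enCat{\bcat{B}}/\ecat{X}$: for a 1-cell out of $\one$ the comparison triangles in the oplax limit are forced to commute strictly by 2-naturality, just as, in $\Cat$, the oplax limit of a functor out of the terminal category is an ordinary slice category.

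The crux is to check that, under the isomorphism $\enCat{X_c}\cong\Setlc/X$ of Example~\ref{ex:Xc-Cat}, the morphism $r$ is precisely $\Enr_\one(\ecat{X})$. Both are morphisms of fibrations over $\Setlc$ (for $\Enr_\one(\ecat{X})$ this is Proposition~\ref{prop:Enr-fib}), so by the essential uniqueness of the free-fibration extension it suffices to see that each sends the terminal object $\id_X$ of $\Setlc/X$ to $\ecat{X}$. This is transparent on the enriched side: the 2-functor $\Enr_\one(\ecat{X})$ sends an $X_c$-category $(S,f\colon S\to X)$ to the $\bcat{B}$-category obtained by reindexing $\ecat{X}$ along $f$, with object-set $S$, extents $s\mapsto|f(s)|$ and homs $\ecat{X}(f(s),f(s'))$; and this reindexing is exactly the cartesian lift of $f$ described in Proposition~\ref{prop:Enr-fib}, namely the fully faithful $\bcat{B}$-functor into $\ecat{X}$. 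At $f=\id_X$ it returns $\ecat{X}$ itself. Granting this, Proposition~\ref{prop:oplax-limit-fibration} and Theorem~\ref{thm:Enr-preserves-limits} combine to give the desired isomorphism $\enCat{(\bcat{B}/\ecat{X})}\cong\enCat{\bcat{B}}/\ecat{X}$.

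I expect the principal obstacle to be this final identification of $r$ with $\Enr_\one(\ecat{X})$: one must track the isomorphism $\enCat{X_c}\cong\Setlc/X$ together with its compatibility with the projections to $\Setlc$ and with the free-fibration unit $\one\to\Setlc/X$, and verify that the reindexing 2-functor agrees with the free fibration not merely on objects but as a morphism of fibrations — that is, that the cartesian lifts supplied by Proposition~\ref{prop:Enr-fib} match the reindexings coming from the lax functor $\ecat{X}$. The remaining steps are then formal consequences of the cited results.
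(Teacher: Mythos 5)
Your proposal is correct and follows essentially the same route as the paper's own proof: define $\bcat{B}/\ecat{X}$ as the oplax limit of $\ecat{X}\colon X_c\to\bcat{B}$ in $\Bicat$, transport it via Theorem~\ref{thm:Enr-preserves-limits}, and then combine Example~\ref{ex:Xc-Cat}, Proposition~\ref{prop:Enr-fib} and Proposition~\ref{prop:oplax-limit-fibration} to recognize the resulting oplax limit in $\twoCAT/\Setlc$ as the strict slice $\enCat{\bcat{B}}/\ecat{X}$. The identification you single out as the crux --- that the induced fibration morphism $r$ agrees with $\Enr_\one(\ecat{X})$ under $\enCat{X_c}\cong\Setlc/X$ --- is exactly what the paper disposes of (more tersely) by citing Proposition~\ref{prop:Enr-fib} together with the essential uniqueness of fibration morphisms out of the free fibration, so your elaboration fills in, rather than departs from, the published argument.
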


\begin{proof}
If we regard $\ecat{X}$ as a lax functor $\ecat{X}\colon X_c\to \bcat{B}$, where $X=\ob(\ecat{X})$, we may take its oplax limit 
\[
    \begin{tikzpicture}[baseline=-\the\dimexpr\fontdimen22\textfont2\relax ]
      \node(0) at (0,0) {$\bcat{B}/\ecat{X}$};
      \node(1) at (2,1) {$X_c$};
      \node(2) at (2,-1){$\bcat{B}$};
      
      \draw [->] (0) to node[auto,labelsize] {} (1);
      \draw [->] (0) to node[auto,swap,labelsize] {} (2);
      \draw [->] (1) to node[auto,labelsize] {$\ecat{X}$} (2);
      \draw [2cell] (1.3,-0.7) to node[auto,swap,labelsize] {} (1.3,0.7);
    \end{tikzpicture}
\]
in $\Bicat$. Explicitly,  $\ob(\bcat{B}/\ecat{X})=\ob(\ecat{X})=X$, while the hom  $(\bcat{B}/\ecat{X})(x,x')$ is given by the slice category  $\bcat{B}(|x|,|x'|)/\ecat{X}(x,x')$ for all $x,x'\in X$.

It follows by Theorem~\ref{thm:Enr-preserves-limits} that  $\enCat{(\bcat{B}/\ecat{X})}$ is the oplax limit 
\[
    \begin{tikzpicture}[baseline=-\the\dimexpr\fontdimen22\textfont2\relax ]
      \node(0) at (0,0) {$\enCat{(\bcat{B}/\ecat{X})}$};
      \node(1) at (2,1) {$\enCat{X_c}$};
      \node(2) at (2,-1){$\enCat{\bcat{B}}$};
      \node(3) at (4,0){$\Setlc$};
      
      \draw [->] (0) to node[auto,labelsize] {} (1);
      \draw [->] (0) to node[auto,swap,labelsize] {} (2);
      \draw [->] (1) to node[auto,labelsize] {$\Enr(\ecat{X})$} (2);
      \draw [->] (1) to node[auto,labelsize] {} (3);
      \draw [->] (1) to node[auto,labelsize] {$\ob$} (3);
      \draw [->] (2) to node[auto,swap,labelsize] {$\ob$} (3);
      \draw [2cell] (1.3,-0.7) to node[auto,swap,labelsize] {} (1.3,0.7);
    \end{tikzpicture}
\]
in $\twoCAT/\Setlc$.

Now $\ob(-)\colon\enCat{X_c}\to\Setlc$ is the free fibration on $X\colon \one\to \Setlc$ by Example~\ref{ex:Xc-Cat}, while $\Enr(\ecat{X})$ is the morphism of fibrations induced by $\ecat{X}\colon \one\to\enCat{\bcat{B}}$ by Proposition~\ref{prop:Enr-fib}, and so by  Proposition~\ref{prop:oplax-limit-fibration} the diagram
\[
    \begin{tikzpicture}[baseline=-\the\dimexpr\fontdimen22\textfont2\relax ]
      \node(0) at (0,0) {$\enCat{(\bcat{B}/\ecat{X})}$};
      \node(1) at (2,1) {$\one$};
      \node(2) at (2,-1){$\enCat{\bcat{B}}$};
      
      \draw [->] (0) to node[auto,labelsize] {} (1);
      \draw [->] (0) to node[auto,swap,labelsize] {} (2);
      \draw [->] (1) to node[auto,labelsize] {$\ecat{X}$} (2);
      \draw [2cell] (1.3,-0.7) to node[auto,swap,labelsize] {} (1.3,0.7);
    \end{tikzpicture}
\]
is an oplax limit in $\twoCAT$. But this says precisely that $\enCat{(\bcat{B}/\ecat{X})}\cong \enCat{\bcat{B}}/\ecat{X}$.
\end{proof}

\begin{example}
\label{ex:Set/X}
    In particular, when $\bcat{B}$ is the cartesian monoidal category $\Set$ regarded as a one-object bicategory, we have for each ($\Set$-)category $\ecat{X}$ the bicategory $\Set/\ecat{X}$ whose set of objects is $\ob(\ecat{X})$ and whose hom-category $(\Set/\ecat{X})(x,x')$ is the slice category $\Set/\ecat{X}(x,x')$. 
    Each functor $F\colon\ecat{Y}\to\ecat{X}$ corresponds to a $\Set/\ecat{X}$-category $\overline{\ecat{Y}}$ given as follows: the objects of $\overline{\ecat{Y}}$ are the same as those of $\ecat{Y}$, the extent of $y$ in $\overline{\ecat{Y}}$ is $Fy$, and the hom $\overline{\ecat{Y}}(y,y')$ is $F_{y,y'}\colon \ecat{Y}(y,y')\to \ecat{X}(Fy,Fy')$.
    Note that since $\Set/\ecat{X}(x,x')\simeq \Set^{\ecat{X}(x,x')}$, $\Set/\ecat{X}$ is (biequivalent to) the free local cocompletion of $\ecat{X}$ regarded as a locally discrete bicategory, as pointed out to us by Ross Street.

    A variant of $\Set/\ecat{X}$ is the free quantaloid $\mathcal{P}\ecat{X}$ over $\ecat{X}$. Specifically, $\mathcal{P}\ecat{X}$ is also a bicategory with the same objects as $\ecat{X}$, but whose hom-category $(\mathcal{P}\ecat{X})(x,x')$ is the powerset lattice $\mathcal{P}(\ecat{X}(x,x'))$, which is equivalent to the full subcategory of the slice category $\Set/\ecat{X}(x,x')$ consisting of the injections to $\ecat{X}(x,x')$. Accordingly, the $\mathcal{P}\ecat{X}$-categories correspond to the \emph{faithful} functors $\ecat{Y}\to\ecat{X}$ \cite[Proposition~3.5]{Garner-total}.
\end{example}

\begin{example}
    Let $\bcat{B}$ be a bicategory with all right liftings. Then for each $b\in \bcat{B}$, we have a $\bcat{B}$-category $\ecat{B}_b$  whose objects are the 1-cells $f\colon x\to b$ in $\bcat{B}$ with codomain $b$, with extent $|(x,f)|=x$, and whose hom $\ecat{B}_b((x,f),(y,g))\colon x\to y$ is the right lifting of $f$ along $g$:
    \[
    \begin{tikzpicture}[baseline=-\the\dimexpr\fontdimen22\textfont2\relax ]
      \node(0) at (0,1) {$x$};
      \node(1) at (3,1) {$y$};
      \node(2) at (1.5,-1) {$b$.};
      \draw [->] (0) to node[auto,labelsize] {$\ecat{B}_b((x,f),(y,g))$} (1);
      \draw[->] (0) to node[auto,swap, labelsize] {$f$} (2);
      \draw[->] (1) to node[auto,labelsize] {$g$} (2);
        \draw [2cell] (2.2,0.3) to node[auto,labelsize] {} (0.8,0.3);
    \end{tikzpicture}    
\]
(See \cite[Section~2]{Gordon_Power} for the dual construction.)
Given a $\bcat{B}$-category $\ecat{X}$, the $\bcat{B}$-functors $\ecat{X}\to\ecat{B}_b$ correspond to the $\bcat{B}$-presheaves on $\ecat{X}$ with extent $b$. Hence if we consider the bicategory $\bcat{B}/\ecat{B}_b$, then a $\bcat{B}/\ecat{B}_b$-category can be identified with a $\bcat{B}$-category equipped with a $\bcat{B}$-presheaf with extent $b$. 

By the universality of right liftings, the bicategory $\bcat{B}/\ecat{B}_b$ is canonically isomorphic to the {\em lax slice} bicategory $\bcat{B}{\sslash} b$: this has 1-cells with codomain $b$ as objects, and diagrams of the form 
   \[
    \begin{tikzpicture}[baseline=-\the\dimexpr\fontdimen22\textfont2\relax ]
      \node(0) at (0,1) {$x$};
      \node(1) at (3,1) {$y$};
      \node(2) at (1.5,-1) {$b$};
      \draw [->] (0) to node[auto,labelsize] {} (1);
      \draw[->] (0) to node[auto,swap, labelsize] {$f$} (2);
      \draw[->] (1) to node[auto,labelsize] {$g$} (2);
        \draw [2cell] (2.2,0.3) to node[auto,labelsize] {} (0.8,0.3);
    \end{tikzpicture}    
\]
as 1-cells from $f$ to $g$. Unlike $\ecat{B}_b$, this lax slice bicategory $\bcat{B}{\sslash} b$ can be defined even when $\bcat{B}$ does not have right liftings, and it is true in general that a $\bcat{B}{\sslash} b$-category corresponds to a $\bcat{B}$-category equipped with a $\bcat{B}$-presheaf with extent $b$. (For a general bicategory $\bcat{B}$, the notion of $\bcat{B}$-presheaf can be defined in terms of actions; see \cite{Street-cohomology} for a definition of the more general notion of module.)
\end{example}

\begin{remark}\label{rmk:tricat-enrichment}
    The bicategory $\bcat{B}/\ecat{X}$ can be obtained from $\ecat{X}$ via a change-of-base process for bicategories enriched in a tricategory. Since the theory of tricategory-enriched bicategories, let alone change-of-base for them, has not really been developed in detail, we merely sketch the details.
    %mention this only in passing. 
    (See \cite[Section 13]{Garner-Shulman} for change-of-base for bicategories enriched over monoidal bicategories.)
    
    We regard $\bcat{B}$ as a tricategory with no non-identity 3-cells, and we regard the cartesian monoidal 2-category $\Cat$ as a one-object tricategory $\Sigma(\Cat)$. There is a lax morphism of tricategories $\Theta\colon\bcat{B}\to\Sigma(\Cat)$ sending each object $b\in\bcat{B}$ to the unique object of $\Sigma(\Cat)$, and sending a 1-cell $f\colon b\to b'$ in $\bcat{B}$ to the category $\bcat{B}(b,b')/f$. Composition with $\Theta$ then sends each $\bcat{B}$-enriched bicategory to a $\Sigma(\Cat)$-enriched bicategory. Since $\bcat{B}$ has no non-identity 3-cells, a $\bcat{B}$-enriched bicategory is just a $\bcat{B}$-enriched category; on the other hand, a $\Sigma(\Cat)$-enriched bicategory is just a bicategory in the ordinary sense. Applying this to the $\bcat{B}$-category $\ecat{X}$ gives the bicategory $\bcat{B}/\ecat{X}$.    
\end{remark}

\section{Variation through enrichment}\label{sect:fibrations}

In the paper \cite{Betti-et-al-variation}, the authors showed how fibrations with codomain $\ecat{X}$ can be seen as certain categories enriched over a bicategory $\bcat{W}(\ecat{X})$ depending on the category $\ecat{X}$. In this section we give a result of the same type, although it differs in several important respects. The bicategory we use is $\Set/\ecat{X}$ (see Example~\ref{ex:Set/X}), which  is like $\bcat{W}(\ecat{X})$ in having as objects the objects of $\ecat{X}$: see Remark~\ref{rmk:W(X)} below for the relationship between the two bicategories. Then we show that fibrations over $\ecat{X}$ can be identified with $\Set/\ecat{X}$-categories which have certain powers. 

\begin{remark}
    \label{rmk:W(X)}
Given objects $x,x'\in\ecat{X}$, a 1-cell in $\bcat{W}(\ecat{X})$ from $x$ to $x'$ consists of a presheaf $E$ on $\ecat{X}$ equipped with maps to $\ecat{X}(-,x)$ and $\ecat{X}(-,x')$; in other words, it consists of a {\em span} of presheaves from $\ecat{X}(-,x)$ to $\ecat{X}(-,x')$. Now a 1-cell $S\to\ecat{X}(x,x')$ in $\Set/\ecat{X}$ from $x$ to $x'$ determines, via Yoneda, a map $S\cdot \ecat{X}(-,x)\to \ecat{X}(-,x')$ of presheaves, where $S\cdot\ecat{X}(-,x)$ denotes the copower of $\ecat{X}(-,x)$ by $S$: the coproduct of $S$ copies of $\ecat{X}(-,x)$. On the other hand there is the codiagonal $S\cdot \ecat{X}(-,x)\to\ecat{X}(-,x)$, and so we obtain a span 
\[
    \begin{tikzpicture}[baseline=-\the\dimexpr\fontdimen22\textfont2\relax ]
      \node(0) at (0,0) {$\ecat{X}(-,x)$};
      \node(1) at (6,0) {$\ecat{X}(-,x')$};
      \node(2) at (3,0) {$S\cdot \ecat{X}(-,x)$};
      \draw [->] (2) to (0);
      \draw [->] (2) to (1);
    \end{tikzpicture}   
\]
of presheaves; that is, a 1-cell in $\bcat{W}(\ecat{X})$ from $x$ to $x'$. This defines the 1-cell part of a homomorphism of bicategories $\Set/\ecat{X}\to\bcat{W}(\ecat{X})$ which is the identity on objects and locally fully faithful. Just as we characterize fibrations over $\ecat{X}$ as $\Set/\ecat{X}$-categories with certain limits, so in \cite{Betti-et-al-variation} these fibrations are seen as $\bcat{W}(\ecat{X})$-categories with certain limits; one key difference is that in the case of $\bcat{W}(\ecat{X})$ the limits in question are absolute. 
\end{remark}

In fact we work not just with fibrations of ordinary categories, but rather fibrations in the 2-category $\enCat{\bcat{B}}$ of  $\bcat{B}$-enriched categories, as in Section~\ref{sect:oplax}. 
One recovers the case of ordinary categories upon taking $\bcat{B}$ to be the one-object bicategory $\Sigma(\Set)$.
We have seen in Theorem~\ref{thm:enrich-over-slice} that, for a $\bcat{B}$-category $\ecat{X}$, $\bcat{B}$-functors with codomain $\ecat{X}$ correspond to $\bcat{B}/\ecat{X}$-enriched categories. We shall see in this section that a $\bcat{B}$-functor $F\colon\ecat{Y}\to\ecat{X}$ is a fibration in  $\enCat{\bcat{B}}$ if and only if the corresponding $\bcat{B}/\ecat{X}$-category $\overline{\ecat{Y}}$ has certain powers. 

First, however, we give an elementary characterization of fibrations in $\enCat{\bcat{B}}$.
To do this, we start with the fact that every $\bcat{B}$-category $\ecat{X}$ has an underlying ordinary category $\ecat{X}_0$ with the same objects; a morphism $x\to x'$ in $\ecat{X}_0$ can exist only if $x$ and $x'$ have the same extent ($|x|=|x'|$), in which case it amounts to a 2-cell $1_{|x|}\to \ecat{X}(x,x')$ in $\bcat{B}$.\footnote{The assignment $\ecat{X}\mapsto\ecat{X}_0$ is the object-part of a 2-functor $\enCat{\bcat{B}}\to\Cat$, arising via change-of-base with respect to a lax functor from $\bcat{B}$ to the cartesian monoidal category $\Set$, seen as a one-object bicategory. The lax functor sends each object $b$ to this unique object; it sends a 1-cell $f\colon b\to c$ to the set $\bcat{B}(b,c)(1_b,f)$ if $b=c$ and the empty set otherwise; with the evident action on 2-cells.} We shall sometimes refer to such morphisms in $\ecat{X}_0$ simply as morphisms in $\ecat{X}$. 
%while the assignment $(x,x')\mapsto\ecat{X}_0(x,x')$ is the object-part of a functor $\ecat{X}_0^\op\times\ecat{X}_0\to\Set$. 
%In particular, 
If $f\colon x'\to x''$ is a morphism in $\ecat{X}$ and $x$ is an object, there is an induced 2-cell $\ecat{X}(x,f)\colon\ecat{X}(x,x')\to \ecat{X}(x,x'')$ defined by pasting $f\colon 1_{|x'|}\to\ecat{X}(x',x'')$ together with the composition 2-cell $M_{x,x',x''}\colon\ecat{X}(x',x'').\ecat{X}(x,x')\to \ecat{X}(x,x'')$. 

\begin{definition}
Let $F\colon\ecat{Y}\to\ecat{X}$ be a $\bcat{B}$-functor. A morphism $h\colon y'\to y$ in $\ecat{Y}_0$ is said to be {\em cartesian} with respect to $F$ if the square 
\[
    \begin{tikzpicture}[baseline=-\the\dimexpr\fontdimen22\textfont2\relax ]
      \node(01) at (0,1) {$\ecat{Y}(z,y')$};
      \node(11) at (4,1) {$\ecat{Y}(z,y)$};
      \node(00) at (0,-1){$\ecat{X}(Fz,Fy')$};
      \node(10) at (4,-1){$\ecat{X}(Fz,Fy)$};

      \draw [->] (01) to node[auto,labelsize] {$\ecat{Y}(z,h)$} (11);
      \draw [->] (11) to node[auto,labelsize] {$F_{z,y}$} (10);
      \draw [->] (01) to node[auto,swap,labelsize] {$F_{z,y'}$} (00);
      \draw [->] (00) to node[auto,swap, labelsize] {$\ecat{X}(Fz,Fh)$} (10);
    \end{tikzpicture}
\]
is a pullback in $\bcat{B}(|z|,|y|)$ for all objects $z$ in $\ecat{Y}$. 
\end{definition}

This implies in particular that $h$ is cartesian with respect to the ordinary functor $F_0\colon\ecat{Y}_0\to\ecat{X}_0$, but in general is stronger than this. 

\begin{proposition}\label{prop:fibration-concrete}
Suppose that the bicategory $\bcat{B}$ has pullbacks in each hom-category $\bcat{B}(a,b)$. 
A $\bcat{B}$-functor $F\colon\ecat{Y}\to\ecat{X}$ is a fibration in $\enCat{\bcat{B}}$ if and only if, for each object $y\in\ecat{Y}$ and each morphism $g\colon x\to Fy$ in $\ecat{X}$ there is a cartesian morphism $\overline{g}\colon g^*y\to y$ in $\ecat{Y}$ with $F\overline{g}=g$.
Given fibrations $F\colon \ecat{Y}\to\ecat{X}$ and $G\colon \ecat{Z}\to\ecat{X}$, a $\bcat{B}$-functor $H\colon \ecat{Y}\to\ecat{Z}$ with $F=G\circ H$ is a morphism of fibrations if and only if $H\colon \ecat{Y}\to\ecat{Z}$ preserves cartesian morphisms. 
\end{proposition}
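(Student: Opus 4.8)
The plan is to unwind the $2$-categorical definition of fibration directly in terms of the enriched data, reducing it to the pointwise condition in the statement. The guiding principle is that, for any $\bcat{B}$-category $\ecat{Z}$, a morphism $\alpha\colon H'\to H$ of $\enCat{\bcat{B}}(\ecat{Z},\ecat{Y})$ is cartesian with respect to $\enCat{\bcat{B}}(\ecat{Z},F)$ precisely when each component $\alpha_z\colon H'z\to Hz$ is cartesian with respect to $F$ in the sense of the preceding Definition. Granting this characterisation, the two remaining clauses of the definition of fibration become automatic: the comparison squares associated with a $1$-cell $c\colon\ecat{W}\to\ecat{Z}$ are morphisms of fibrations because precomposition with $c$ reindexes a $\bcat{B}$-natural transformation by $(\alpha c)_w=\alpha_{cw}$ and hence sends pointwise-cartesian families to pointwise-cartesian families; and similarly the statement that $H$ is a morphism of fibrations iff it preserves cartesian $1$-cells follows at once, since $H$ acts on components and preservation of cartesianness may be tested componentwise.

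For the \emph{if} direction I would fix $\ecat{Z}$ and a $\bcat{B}$-natural transformation $\gamma\colon G\to FH$ and build a cartesian lift. For each object $z$ the given elementary lifting of $\gamma_z\colon Gz\to FHz$ supplies a cartesian morphism $\overline{\gamma_z}\colon H'z\to Hz$ with $F\overline{\gamma_z}=\gamma_z$; I set $H'z:=(\gamma_z)^\ast Hz$. The hom $1$-cells $H'_{z,z'}\colon\ecat{Z}(z,z')\to\ecat{Y}(H'z,H'z')$ are then defined by the universal property of the pullback: since $\overline{\gamma_{z'}}$ is cartesian, $\ecat{Y}(H'z,H'z')$ is the pullback in $\bcat{B}(|z|,|z'|)$ of $\ecat{Y}(H'z,Hz')$ and $\ecat{X}(Gz,Gz')$ over $\ecat{X}(Gz,FHz')$, and the two required legs are obtained from $H_{z,z'}$ (postcomposed with the action of $\overline{\gamma_z}$) and from $G_{z,z'}$, their compatibility being exactly the naturality of $\gamma$ in the form of Lemma~\ref{lem:B-nat-tr}. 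One checks routinely that $H'$ is a $\bcat{B}$-functor with $FH'=G$ and that $(\overline{\gamma_z})_z$ assembles into a $\bcat{B}$-natural transformation $\alpha\colon H'\to H$ with $F\alpha=\gamma$. The universal property of $\alpha$ as a cartesian $1$-cell is then obtained by applying the hom-functor $\bcat{B}(|z|,|z|)(1_{|z|},-)$, which preserves pullbacks, to the defining pullbacks of the $\overline{\gamma_z}$; this converts the pullbacks of $1$-cells into pullbacks of sets of components, producing the required unique factorisation pointwise, and the factorisation is $\bcat{B}$-natural because two $2$-cells into a pullback agree as soon as their projections do.

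For the \emph{only if} direction I would first recover an ordinary cartesian lift. Probing with the one-object $\bcat{B}$-category on the extent $a=|y|$ identifies $\enCat{\bcat{B}}(-,F)$ with (a restriction of) the underlying functor $F_0\colon\ecat{Y}_0\to\ecat{X}_0$, so the fibration hypothesis yields, for each $g\colon x\to Fy$, a morphism $\overline{g}\colon g^\ast y\to y$ in $\ecat{Y}_0$ that is cartesian for $F_0$ and lies over $g$. It remains to upgrade this to an enriched-cartesian morphism, that is, to show that $\ecat{Y}(z,g^\ast y)$ is a pullback in $\bcat{B}(|z|,a)$ for every $z$; equivalently, since the representables jointly reflect pullbacks in $\bcat{B}(|z|,a)$, that the square becomes a pullback of sets after applying $\bcat{B}(|z|,a)(w,-)$ for every $1$-cell $w\colon|z|\to a$. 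The $2$-cells $w\to\ecat{Y}(z,y)$ being probed are exactly the $1$-cells $z\to y$ of the bicategory $\bcat{B}/\ecat{Y}$, and I would detect them by testing the fibration $\enCat{\bcat{B}}(\ecat{Z},F)$ against two-object $\bcat{B}$-categories $\ecat{Z}$ whose non-trivial hom realises $w$ and whose composites are supplied by the pullbacks assumed to exist in the hom-categories of $\bcat{B}$.

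This last strengthening, from $F_0$-cartesian to enriched-cartesian, is where I expect the real work to lie: the one-object probe only sees the identity $1$-cells $1_{|z|}$ and hence only the underlying functor, whereas the local pullback condition ranges over all $1$-cells $w$ of $\bcat{B}$, which cannot be seen by $\bcat{B}$-functors between one-object test categories, since these preserve extents strictly. Marshalling enough two-object test $\bcat{B}$-categories to probe every $w$, while keeping their composition well defined using only the assumed pullbacks, is the main obstacle; once the local pullbacks are established, the equivalence for morphisms of fibrations follows immediately from the componentwise characterisation of cartesian $1$-cells obtained along the way.
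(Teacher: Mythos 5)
Your ``if'' direction is sound and completable: pointwise enriched-cartesian lifts do assemble into cartesian 2-cells (the required factorisations exist componentwise by applying $\bcat{B}(|z|,|y|)(1_{|z|},-)$ to the defining pullbacks, and are $\bcat{B}$-natural by uniqueness of 2-cells into a pullback object), and reindexing along any $c\colon\ecat{W}\to\ecat{Z}$ preserves them. The genuine gap is exactly where you locate it: the ``only if'' direction. Two things go wrong there. First, your guiding principle is only available in the easy direction; the claim that a cartesian 2-cell of $\enCat{\bcat{B}}(\ecat{Z},\ecat{Y})$ has enriched-cartesian components is, for a fibration $F$, essentially equivalent to the statement being proved (apply it to one-object probes and you get the proposition), so it cannot be invoked as a known fact. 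Second, and more seriously, the proposed two-object test categories need not exist under the stated hypotheses: a $\bcat{B}$-category with objects $z$ and $y$ must also have a hom 1-cell $\ecat{Z}(y,z)\colon|y|\to|z|$ in the ``backwards'' direction, together with composition 2-cells out of it, and when $\bcat{B}$ is only assumed to have local pullbacks there is no candidate for this hom (one would need something like local initial objects stable under composition). So the probes you plan to marshal cannot in general be built, and the one-object probes, as you yourself note, only see $F_0$.

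The missing idea is to use one large probe instead of many small ones: the oplax limit $\ecat{L}=\ecat{X}/F$, whose objects are pairs $(g,y)$ with $g\colon x\to Fy$ in $\ecat{X}_0$ and whose homs are constructed precisely from the assumed local pullbacks. This is what the paper does. Having built $\ecat{L}$ explicitly, it invokes the Street--Weber characterisation recalled in Section~\ref{sect:oplax}: in a 2-category with oplax limits of 1-cells, $F$ is a fibration if and only if the diagonal $D\colon\ecat{Y}\to\ecat{L}$ has a right adjoint in $\enCat{\bcat{B}}/\ecat{X}$ (in Street's argument the representable fibration property is applied with $\ecat{Z}=\ecat{L}$ to the universal 2-cell, which is how the hypothesis gets used without any ad hoc test categories). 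Since a right adjoint is given by pointwise universal arrows, and the hom of $\ecat{L}$ out of $Dz$ into $(g,y)$ is the pullback of $\ecat{Y}(z,y)\to\ecat{X}(Fz,Fy)\leftarrow\ecat{X}(Fz,x)$, the universal property of the adjoint at $(g,y)$ is literally the enriched-cartesianness of a lift $\overline{g}\colon g^*y\to y$. Your morphisms-of-fibrations clause inherits the same dependency: it would follow from the componentwise characterisation of cartesian 2-cells, but that characterisation is exactly what the missing direction must supply (in the paper it instead falls out of the mate/adjunction formulation).
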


\begin{proof}
The pullbacks in the hom-categories of $\bcat{B}$ can be used to construct oplax limits in $\enCat{\bcat{B}}$, as we shall now show. Let $F\colon\ecat{Y}\to\ecat{X}$ be a $\bcat{B}$-functor; then the oplax limit $\ecat{L}=\ecat{X}/F$ has:
\begin{itemize}
    \item objects given by pairs $(g,y)$, with $y\in\ecat{Y}$ and $g\colon x\to Fy$ in $\ecat{X}_0$
    \item the extent of $(g,y)$ equal to the extent of $y$ (which is also the extent of $x$)
    \item homs given by pullbacks as in 
    \[
    \begin{tikzpicture}[baseline=-\the\dimexpr\fontdimen22\textfont2\relax ]
      \node(01) at (0,1.5) {$\ecat{L}((g',y'),(g,y))$};
      \node(11) at (4,1.5) {$\ecat{Y}(y',y)$};
      \node(00) at (0,-1.5){$\ecat{X}(x',x)$};
      \node(1x) at (4,0) {$\ecat{X}(Fy',Fy)$};
      \node(10) at (4,-1.5){$\ecat{X}(x',Fy)$};

      \draw [->] (01) to node[auto,labelsize] {$U_{(g',y'),(g,y)}$} (11);
      \draw [->] (11) to node[auto,labelsize] {$F_{y',y}$} (1x);
      \draw [->] (1x) to node[auto,labelsize] {$\ecat{X}(g',Fy)$} (10);
      \draw [->] (01) to node[auto,swap,labelsize] {$V_{(g',y'),(g,y)}$} (00);
      \draw [->] (00) to node[auto,swap, labelsize] {$\ecat{X}(x',g)$} (10);
    \end{tikzpicture}
\]
\item projections $V\colon\ecat{L}\to\ecat{X}$ and $U\colon\ecat{L}\to\ecat{Y}$ sending an object $(g,y)$ to $x$ and to $y$, and defined on homs as in the diagram above.
\end{itemize}
The diagonal $\bcat{B}$-functor $D\colon\ecat{Y}\to\ecat{L}$ sends an object $z\in\ecat{Y}$ to $(1_{Fz},z)\in\ecat{L}$. Taking $(g',y')=Dz$ in the above diagram gives a pullback
\[
    \begin{tikzpicture}[baseline=-\the\dimexpr\fontdimen22\textfont2\relax ]
      \node(01) at (0,1) {$\ecat{L}(Dz,(g,y))$};
      \node(11) at (4,1) {$\ecat{Y}(z,y)$};
      \node(00) at (0,-1){$\ecat{X}(Fz,x)$};
      \node(10) at (4,-1){$\ecat{X}(Fz,Fy)$.};

      \draw [->] (01) to node[auto,labelsize] {$U$} (11);
      \draw [->] (11) to node[auto,labelsize] {$F_{z,y}$} (10);
      \draw [->] (01) to node[auto,swap,labelsize] {$V$} (00);
      \draw [->] (00) to node[auto,swap, labelsize] {$\ecat{X}(Fz,g)$} (10);
    \end{tikzpicture}
\]
Now $F$ is a fibration just when $D$ has a right adjoint in $\enCat{\bcat{B}}/\ecat{X}$. Such an adjoint is given on objects by a lifting of $g\colon Fx\to y$ to some $\overline{g}\colon g^* y\to y$, and the universal property says that this lifting is cartesian.
\end{proof}

We now turn to the characterization of fibrations of $\bcat{B}$-categories in terms of $\bcat{B}/\ecat{X}$-categories. First recall that if $\bcat{W}$ is a bicategory and $\ecat{Z}$ is a $\bcat{W}$-category then powers in $\ecat{Z}$ involve an object $y$ of $\ecat{Z}$ and a 1-cell $v\colon x\to |y|$ in $\bcat{W}$ with codomain the extent of $y$. The power of $y$ by $v$ consists of an object $v\pitchfork y$ of $\ecat{Z}$ with extent $|v\pitchfork y|=x$, together with a 2-cell
\[
    \begin{tikzpicture}[baseline=-\the\dimexpr\fontdimen22\textfont2\relax ]
      \node(0) at (0,0) {$|v\pitchfork y|$};
      \node(1) at (3,0) {$|y|$};
      
      \draw [->,bend left=30] (0) to node[auto,labelsize] {} (1);
      \node[auto,labelsize] at (1.5,0.75) {$v$};
      \draw [->,bend right=30] (0) to node[auto,swap,labelsize] {} (1);
      \node[auto,labelsize] at (1.5,-0.8) {$\ecat{Z}(v\pitchfork y,y)$};
      \draw [2cell] (1.5,0.5) to node[auto,labelsize] {$\eta$} (1.5,-0.5);
    \end{tikzpicture}   
\]
such that for all $z\in\ecat{Z}$ and all
\[
    \begin{tikzpicture}[baseline=-\the\dimexpr\fontdimen22\textfont2\relax ]
      \node(0) at (0,0) {$|z|$};
      \node(1) at (1.5,2) {$|v\pitchfork y|$};
      \node(2) at (3,0) {$|y|$};
      \draw [->] (0) to node[auto,labelsize] {$b$} (1);
      \draw[->] (0) to node[auto,swap, labelsize] {$\ecat{Z}(z,y)$} (2);
      \draw[->] (1) to node[auto,labelsize] {$v$} (2);
        \draw [2cell] (1.5,1.5) to node[auto,labelsize] {$\alpha$} (1.5,0.1);
    \end{tikzpicture}    
\]
there exists a unique $\gamma$ making the pasting composite
\[
    \begin{tikzpicture}[baseline=-\the\dimexpr\fontdimen22\textfont2\relax ]
      \node(0) at (0,0) {$|z|$};
      \node(1) at (2,3) {$|v\pitchfork y|$};
      \node(2) at (4,0) {$|y|$};
      \draw [->, bend left=30] (0) to node[auto,labelsize] {$b$} (1);
      \draw [->, bend right=30] (0) to node[pos=0.25,fill=white,labelsize] {$\ecat{Z}(z,v\pitchfork y)$} (1);
      \draw[->,bend right=30] (0) to node[auto,swap,labelsize] {$\ecat{Z}(z,y)$} (2);
      \draw[->,bend left=30] (1) to node[auto,labelsize] {$v$} (2);
      \draw[->,bend right=30] (1) to node[pos=0.75,fill=white,labelsize] {$\ecat{Z}(v\pitchfork y,y)$} (2);
        \draw [2cell] (2,1) to node[auto,labelsize] {$M$} (2,-0.5);
        \draw [2cell] (1,2.2) to node[auto,labelsize] {$\gamma$} (1,0.8);
        \draw [2cell] (3,2.2) to node[auto,labelsize] {$\eta$} (3,0.8);
    \end{tikzpicture}    
\]
equal to $\alpha$.
(In other words, the pasting of $\eta$ and $M$ exhibits $\ecat{Z}(z,v\pitchfork y)$ as the right lifting of $\ecat{Z}(z,y)$ along $v$.)

We consider this in the case where $\bcat{W}=\bcat{B}/\ecat{X}$ and $\ecat{Z}$ is the $\bcat{B}/\ecat{X}$-category $\overline{\ecat{Y}}$ corresponding to a $\bcat{B}$-functor $F\colon\ecat{Y}\to\ecat{X}$. Then an object $y$ of $\overline{\ecat{Y}}$ is just an object of $\ecat{Y}$, and the extent of $y$, as an object of $\bcat{B}/\ecat{X}$, is the object $Fy$ of $\ecat{X}$. A general 1-cell $x\to Fy$ in $\bcat{B}/\ecat{X}$ has the form 
\[
    \begin{tikzpicture}[baseline=-\the\dimexpr\fontdimen22\textfont2\relax ]
      \node(0) at (0,0) {$|x|$};
      \node(1) at (3,0) {$|Fy|$,};
      
      \draw [->,bend left=30] (0) to node[auto,labelsize] {} (1);
      \draw [->,bend right=30] (0) to node[auto,swap,labelsize] {} (1);
      \node[auto,labelsize] at (1.5,0.75) {$v$};
      \node[auto,labelsize] at (1.5,-0.8) {$\ecat{X}(x,Fy)$};
      \draw [2cell] (1.5,0.5) to node[auto,labelsize] {$w$} (1.5,-0.5);
    \end{tikzpicture}   
\]
but we shall only consider the special case where $|x|=|Fy|$ and $v=1_{|x|}$, so that in fact we are dealing with a morphism $w\colon x\to Fy$ in $\ecat{X}_0$. In general, we call a 1-cell $(w\colon v\to \ecat{X}(x,x'))\colon x\to x'$ in $\bcat{B}/\ecat{X}$ a {\em singleton} 1-cell if $|x|=|x'|$ and $v=1_{|x|}$. 
Note that the category $\ecat{X}_0$ can be regarded as a sub-bicategory of $\bcat{B}/\ecat{X}$ whose 1-cells are the singleton 1-cells.
When $\bcat{B}=\Set$, a 1-cell $x\to x'$ in $\Set/\ecat{X}$ corresponds to a set $v$ equipped with a function $w\colon v\to \ecat{X}(x,x')$; in this case, the singleton 1-cells in $\Set/\ecat{X}$ can be identified with those 1-cells with $v$ a singleton, whence the name singleton.

A power of $y$ by $w\colon 1\to \ecat{X}(x,Fy)$ then consists of an object $w\pitchfork y$ of $\ecat{Y}$ with $F(w\pitchfork y)$ $=x$ together with a morphism $\overline{w}\colon w\pitchfork y\to y$ in $\ecat{Y}_0$ with $F\overline{w}=w$ --- that is, a lifting $\overline{w}$ of $w$ --- subject to the universal property stating that for all $z\in\ecat{Y}$, $b\colon |z|\to|y|$, $\alpha$, and $\beta$ making
\[
   \begin{tikzpicture}[baseline=-\the\dimexpr\fontdimen22\textfont2\relax ]
      \node(0) at (0,0) {$|z|$};
       \node(1) at (2,3) {$|x|$};
       \node(2) at (4,0) {$|y|$};
      \draw [->, bend left=30] (0) to node[auto,labelsize] {$b$} (1);
      \draw[->,bend left=40] (1) to node[auto,labelsize] {$1$} (2);
      \draw [->, bend left=30] (0) to node[fill=white,labelsize] {$\ecat{Y}(z,y)$} (2);
      \draw[->,bend right=30] (0) to node[auto,swap,labelsize] {$\ecat{X}(Fz,Fy)$} (2);
      \draw [2cell] (2,2.4) to node[auto,labelsize] {$\alpha$} (2,1);
        \draw [2cell] (2,0.7) to node[auto,labelsize] {$F_{z,y}$} (2,-0.7);
    \end{tikzpicture}    
    \qquad = \qquad 
    \begin{tikzpicture}[baseline=-\the\dimexpr\fontdimen22\textfont2\relax ]
      \node(0) at (0,0) {$|z|$};
      \node(1) at (2,3) {$|x|$};
      \node(2) at (4,0) {$|y|,$};
      \draw [->, bend left=30] (0) to node[auto,labelsize] {$b$} (1);
      \draw [->, bend right=30] (0) to node[pos=0.25,fill=white,labelsize] {$\ecat{X}(Fz,x)$} (1);
      \draw[->,bend right=30] (0) to node[auto,swap,labelsize] {$\ecat{X}(Fz,Fy)$} (2);
      \draw[->,bend left=30] (1) to node[auto,labelsize] {$1$} (2);
      \draw[->,bend right=30] (1) to node[pos=0.75,fill=white,labelsize] {$\ecat{X}(x,Fy)$} (2);
        \draw [2cell] (2,1) to node[auto,labelsize] {$M$} (2,-0.5);
        \draw [2cell] (1,2.2) to node[auto,labelsize] {$\beta$} (1,0.8);
        \draw [2cell] (3,2.2) to node[auto,labelsize] {$w$} (3,0.8);
    \end{tikzpicture}    
\]
there exists a unique $\gamma$ making the pasting composites 
\[
    \begin{tikzpicture}[baseline=-\the\dimexpr\fontdimen22\textfont2\relax ]
      \node(0) at (0,0) {$|z|$};
       \node(1) at (2,3) {$|w\pitchfork y|$};
       \node(2) at (4,0) {$|y|$};
      \draw [->, bend left=30] (0) to node[auto,labelsize] {$b$} (1);
      \draw[->,bend left=40] (1) to node[auto,labelsize] {$1$} (2);
      \draw [->, bend left=30] (0) to node[fill=white,labelsize] {$\ecat{Y}(z,w\pitchfork y)$} (2);
      \draw[->,bend right=30] (0) to node[auto,swap,labelsize] {$\ecat{X}(Fz,x)$} (2);
        \draw [2cell] (2,2.4) to node[auto,labelsize] {$\gamma$} (2,1);
        \draw [2cell] (2,0.7) to node[auto,labelsize] {$F_{z,w\pitchfork y}$} (2,-0.7);
    \end{tikzpicture}    
    \qquad  \qquad    \begin{tikzpicture}[baseline=-\the\dimexpr\fontdimen22\textfont2\relax ]
      \node(0) at (0,0) {$|z|$};
      \node(1) at (2,3) {$|w\pitchfork y|$};
      \node(2) at (4,0) {$|y|$};
      \draw [->, bend left=30] (0) to node[auto,labelsize] {$b$} (1);
      \draw [->, bend right=30] (0) to node[pos=0.25,fill=white,labelsize] {$\ecat{Y}(z,v\pitchfork y)$} (1);
      \draw[->,bend right=30] (0) to node[auto,swap,labelsize] {$\ecat{Y}(z,y)$} (2);
      \draw[->,bend left=30] (1) to node[auto,labelsize] {$1$} (2);
      \draw[->,bend right=30] (1) to node[pos=0.75,fill=white,labelsize] {$\ecat{Y}(w\pitchfork y,y)$} (2);
        \draw [2cell] (2,1) to node[auto,labelsize] {$M$} (2,-0.5);
        \draw [2cell] (1,2.2) to node[auto,labelsize] {$\gamma$} (1,0.8);
        \draw [2cell] (3,2.2) to node[auto,labelsize] {$\overline{w}$} (3,0.8);
    \end{tikzpicture}    
\]
equal respectively to $\beta$ and $\alpha$. But this says exactly that if the exterior of the diagram 
\[
    \begin{tikzpicture}[baseline=-\the\dimexpr\fontdimen22\textfont2\relax ]
      \node(01) at (0,1) {$\ecat{Y}(z,w\pitchfork y)$};
      \node(11) at (4,1) {$\ecat{Y}(z,y)$};
      \node(00) at (0,-1){$\ecat{X}(Fz,x)$};
      \node(10) at (4,-1){$\ecat{X}(Fz,Fy)$};
      \node(x) at (-2,3) {$b$};

      \draw [->] (01) to node[auto,labelsize] {$\ecat{Y}(z,\overline{w})$} (11);
      \draw [->] (11) to node[auto,labelsize] {$F_{z,y}$} (10);
      \draw [->] (01) to node[auto,swap,labelsize] {$F_{z,w\pitchfork y}$} (00);
      \draw [->] (00) to node[auto,swap, labelsize] {$\ecat{X}(Fz,w)$} (10);
      \draw[->, bend left=20] (x) to node[auto,labelsize] {$\alpha$} (11);
      \draw[->, bend right=28] (x) to node[auto,swap,labelsize] {$\beta$} (00);
      \draw[->,dotted] (x) to node[auto,labelsize] {$\gamma$} (01);
    \end{tikzpicture}
\]
in $\bcat{B}(|z|,|y|)$ commutes, then there is a unique $\gamma$ making the triangular regions commute; in other words, that the internal square is a pullback. This in turn says that $\overline{w}$ is a cartesian lifting of $w$. 
This now proves:

\begin{proposition}
Let $\bcat{B}$ be a bicategory in which each hom-category has pullbacks. A $\bcat{B}$-functor $F\colon\ecat{Y}\to\ecat{X}$ is a fibration if and only if the corresponding $\bcat{B}/\ecat{X}$-category $\overline{\ecat{Y}}$ has powers by morphisms in $\ecat{X}_0$; that is, powers by singleton 1-cells.  
\end{proposition}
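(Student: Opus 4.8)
The plan is to reduce the claim to the elementary characterization of fibrations in Proposition~\ref{prop:fibration-concrete} by identifying the universal property of a power in $\overline{\ecat{Y}}$ by a singleton 1-cell with the universal property of a cartesian lifting in $\ecat{Y}$. The key point is that singleton 1-cells carry no bicategorical information beyond a morphism of $\ecat{X}_0$, so the two universal properties can be compared hom-category by hom-category.

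First I would record the identification of data. A singleton 1-cell $x\to Fy$ in $\bcat{B}/\ecat{X}$ has, by definition, $|x|=|Fy|$ and underlying 1-cell $v=1_{|x|}$, hence is nothing but a 2-cell $w\colon 1_{|x|}\to\ecat{X}(x,Fy)$ in $\bcat{B}$, i.e.\ a morphism $w\colon x\to Fy$ in the underlying category $\ecat{X}_0$. Thus the singleton 1-cells with codomain $Fy$ are exactly the morphisms into $Fy$ in $\ecat{X}_0$ appearing in Proposition~\ref{prop:fibration-concrete}, and a power $w\pitchfork y$, when it exists, carries the data of a lifting $\overline{w}\colon w\pitchfork y\to y$ of $w$.

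The crux is to show that, for such a $w$, a power $w\pitchfork y$ in $\overline{\ecat{Y}}$ is precisely a cartesian lifting of $w$. I would unwind the defining universal property of the power, which asks that the pasting of its counit with the composition 2-cell exhibit $\overline{\ecat{Y}}(z,w\pitchfork y)$ as the right lifting of $\overline{\ecat{Y}}(z,y)$ along $w$ for every $z\in\ecat{Y}$. Because $w$ has underlying 1-cell $1_{|x|}$, and because the hom-categories and composition of $\bcat{B}/\ecat{X}$ are given by slice categories and pasting composites in $\bcat{B}$, this right-lifting condition unpacks---using the hypothesis that each hom-category of $\bcat{B}$ has pullbacks---into the statement that the comparison square with vertices $\ecat{Y}(z,w\pitchfork y)$, $\ecat{Y}(z,y)$, $\ecat{X}(Fz,x)$ and $\ecat{X}(Fz,Fy)$ is a pullback in $\bcat{B}(|z|,|y|)$ for all $z$. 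This is exactly the condition that $\overline{w}\colon w\pitchfork y\to y$ be cartesian with respect to $F$.

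Putting the two reductions together, $\overline{\ecat{Y}}$ has powers by all singleton 1-cells if and only if every morphism of $\ecat{X}_0$ admits a cartesian lifting, which by Proposition~\ref{prop:fibration-concrete} is equivalent to $F$ being a fibration. I expect the main obstacle to be the middle step: one must verify that specializing the pasting-diagram universal property to an identity underlying 1-cell $v=1_{|x|}$ collapses it to an ordinary pullback square, neither weaker nor stronger. Concretely this is the check that a right lifting along a singleton 1-cell in $\bcat{B}/\ecat{X}$ is computed by an honest pullback in the appropriate hom-category of $\bcat{B}$, which is precisely where the local-pullback hypothesis on $\bcat{B}$ enters.
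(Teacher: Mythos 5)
Your proposal is correct and follows essentially the same route as the paper: identify singleton 1-cells with morphisms of $\ecat{X}_0$, unwind the power's universal property into the statement that the square with vertices $\ecat{Y}(z,w\pitchfork y)$, $\ecat{Y}(z,y)$, $\ecat{X}(Fz,x)$, $\ecat{X}(Fz,Fy)$ is a pullback (i.e.\ that $\overline{w}$ is a cartesian lifting), and conclude by Proposition~\ref{prop:fibration-concrete}. One small correction: that middle identification is a direct comparison of universal properties and needs no ambient pullbacks at all; the local-pullback hypothesis on $\bcat{B}$ enters only through Proposition~\ref{prop:fibration-concrete} itself, whose proof uses the pullbacks to construct oplax limits in $\enCat{\bcat{B}}$ and thereby relate the existence of cartesian liftings to the representable notion of fibration.
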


We conclude by strengthening this correspondence to an isomorphism between suitable 2-categories. 
Let $\bcat{W}$ be a bicategory and $H\colon \ecat{Z}\to\ecat{Z}'$ a $\bcat{W}$-functor. Suppose that the power $v\pitchfork y$ of $y\in\ecat{Z}$ by $v\colon x\to|y|$ exists in $\ecat{Z}$, with the associated 2-cell $\eta\colon v\to \ecat{Z}(v\pitchfork y,y)$. Then $H$ is said to \emph{preserve} the power $v\pitchfork y$ if the 2-cell $H_{v\pitchfork y,y}\circ \eta\colon v\to \ecat{Z}'(H(v\pitchfork y),Hy)$ exhibits $H(v\pitchfork y)$ as the power $v\pitchfork Hy$ in $\ecat{Z}'$. 

\begin{theorem}
Let $\bcat{B}$ be a bicategory in which each hom-category has pullbacks. The canonical isomorphism of 2-categories $\enCat{(\bcat{B}/\ecat{X})}\cong \enCat{\bcat{B}}/\ecat{X}$ restricts to an isomorphism between the locally full sub-2-category of $\enCat{(\bcat{B}/\ecat{X})}$ having  
\begin{itemize}
    \item the $\bcat{B}/\ecat{X}$-categories with powers by singleton 1-cells as objects, and 
    \item the $\bcat{B}/\ecat{X}$-functors preserving these powers as 1-cells,
\end{itemize} 
and the locally full sub-2-category of $\enCat{\bcat{B}}/\ecat{X}$ having 
\begin{itemize}
    \item the fibrations to $\ecat{X}$ as objects, and 
    \item the fibration morphisms as 1-cells. 
\end{itemize}
\end{theorem}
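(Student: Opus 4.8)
The plan is to leverage the isomorphism $\enCat{(\bcat{B}/\ecat{X})}\cong\enCat{\bcat{B}}/\ecat{X}$ of Theorem~\ref{thm:enrich-over-slice} and merely check that it carries the indicated objects and 1-cells on each side to one another; since both displayed sub-2-categories are \emph{locally full}, the matching of 2-cells will then be automatic, and we will obtain an isomorphism of 2-categories by restriction. On objects this is exactly the content of the preceding proposition: the $\bcat{B}/\ecat{X}$-category $\overline{\ecat{Y}}$ has powers by singleton 1-cells precisely when the corresponding $\bcat{B}$-functor $F\colon\ecat{Y}\to\ecat{X}$ is a fibration. The discussion leading up to that proposition moreover identifies, for a singleton 1-cell $w\colon 1\to\ecat{X}(x,Fy)$, the power $w\pitchfork y$ with the cartesian lifting $g^*y$ (where $g=w$), the structure 2-cell $\eta$ being the cartesian morphism $\overline{w}$.

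The real work is therefore at the level of 1-cells. Under the isomorphism of Theorem~\ref{thm:enrich-over-slice}, a $\bcat{B}/\ecat{X}$-functor $\overline{H}\colon\overline{\ecat{Y}}\to\overline{\ecat{Z}}$ between power-bearing categories corresponds to a 1-cell in $\enCat{\bcat{B}}/\ecat{X}$, that is, a $\bcat{B}$-functor $H\colon\ecat{Y}\to\ecat{Z}$ with $G\circ H=F$, between the associated fibrations. I would unwind the definition of power-preservation in this case: $\overline{H}$ preserves the power $w\pitchfork y$ exactly when the 2-cell $\overline{H}_{w\pitchfork y,y}\circ\eta$ exhibits $H(w\pitchfork y)$ as the power $w\pitchfork Hy$ in $\overline{\ecat{Z}}$. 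Translating through the identification of singleton powers with cartesian liftings, this says precisely that $H\overline{w}$ is a cartesian lifting of $w$ to $Hy$ --- equivalently, that $H$ sends the cartesian morphism $\overline{w}$ to a cartesian morphism.

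It then remains to observe that preserving all the chosen cartesian liftings $\overline{w}$ is equivalent to preserving \emph{all} cartesian morphisms. This is routine: since $G\circ H=F$, the functor $H$ sends vertical (identity-over-the-base) isomorphisms to vertical isomorphisms, and every cartesian morphism factors as such an isomorphism followed by a chosen lifting; as $H$ preserves isomorphisms, it preserves cartesianness of an arbitrary morphism once it does so for the chosen liftings. By Proposition~\ref{prop:fibration-concrete}, this last condition is exactly the statement that $H$ is a morphism of fibrations. Hence $\overline{H}$ preserves singleton powers if and only if the corresponding $H$ is a morphism of fibrations, which establishes the bijection on 1-cells.

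Finally, both displayed sub-2-categories are locally full sub-2-categories of $\enCat{(\bcat{B}/\ecat{X})}$ and of $\enCat{\bcat{B}}/\ecat{X}$ respectively, and the ambient isomorphism is bijective on 2-cells; having matched objects and 1-cells, local fullness guarantees that the restriction is again an isomorphism of 2-categories. I expect the only genuinely delicate point to be the bookkeeping in the second paragraph, namely verifying that the power-preservation pasting condition coincides on the nose with the assertion that $H\overline{w}$ is cartesian. This amounts to comparing the universal property defining $w\pitchfork Hy$ with the pullback square defining cartesianness, which is exactly the comparison already carried out (for a single category) in the passage preceding the previous proposition, now applied in $\overline{\ecat{Z}}$.
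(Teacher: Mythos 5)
Your proposal is correct and takes exactly the route the paper intends: the paper states this theorem without a separate proof, as the immediate consequence of Theorem~\ref{thm:enrich-over-slice}, the preceding proposition identifying singleton powers with cartesian liftings, and Proposition~\ref{prop:fibration-concrete}, combined with local fullness. Your unwinding of power-preservation into ``$H\overline{w}$ is cartesian,'' together with the routine reduction from preservation of chosen cartesian liftings to preservation of all cartesian morphisms (via vertical isomorphisms), is precisely the argument the paper leaves implicit.
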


%\bibliographystyle{alpha} %
%\bibliography{myref} %

\end{document}